\newcommand{\R}{\mathbb{R}}
\newcommand{\C}{\mathbb{C}}
\newcommand{\U}{\mathcal{U}}
\newcommand{\ue}{\beta}                                  
\newcommand{\naka}{\beta}
\newcommand{\sita}{\beta_0}
\newcommand{\bmin}{\beta_{{\rm min}}}
\newcommand{\bint}{\beta_{{\rm int}}}
\newcommand{\Fbel}[1]{\widetilde{\mu_{\varphi,{#1}}}}
\newcommand{\Fru}[1]{F_{\rho,{#1}}}
\newcommand{\ru}{\rho} 
\newcommand{\n}[1]{\left\vert {#1} \right\vert}                           %
\newcommand{\N}[1]{\left\Vert {#1} \right\Vert}                           
\newcommand{\inner}[2]{\left\langle {#1} , {#2} \right\rangle}            
\newcommand{\Vol}[1]{{\rm Vol} \left( {#1} \right)}                   
\newcommand{\Volb}[1]{{\rm Vol}_{\beta} \left( {#1} \right)}               
\newcommand{\UDL}[1]{\underline{ {#1} }}                   %
\newcommand{\Siegel}{\mathcal{D}}
\newcommand{\cent}{t}
\newcommand{\Kr}{C}
\date{}
\newtheorem{Def}{Definition}[section]
\newtheorem{Thm}[Def]{Theorem}
\newtheorem{Lem}[Def]{Lemma}
\newtheorem{IntroThm}{Theorem}
\newtheorem{Cor}[Def]{Corollary}
\newtheorem{?}[Def]{Question}
\title[Composition operators of a minimal domain]
{Composition operators on the Bergman spaces of a minimal bounded homogeneous domain} 
\author[S. Yamaji]{Satoshi Yamaji}
\address{%
Satoshi Yamaji \endgraf
Graduate School of Mathematics \endgraf
Nagoya University \endgraf
Chikusa-ku, Nagoya, 464-8602 \endgraf
Japan
}
\email{satoshi.yamaji@math.nagoya-u.ac.jp}
\keywords{composition operator, Bergman space, bounded homogeneous domain, minimal domain, Carleson measure}
\subjclass[2000]{Primary 47B33; Secondary 47B35, 32A25}
\begin{document}

\begin{abstract}
Using an integral formula on a homogeneous Siegel domain,
 we show a necessary and sufficient condition for composition operators on the weighted Bergman space
 of a minimal bounded homogeneous domain to be compact. 
To describe the compactness of composition operators, we see a boundary behavior of the Bergman kernel.
\end{abstract}

\maketitle

\section{Introduction}
In 2007, Zhu \cite{ZhuComp} considered the composition operators on the weighted Bergman space of the unit ball. 
His results are extended to the case that the domain is the Harish-Chandra realization of 
irreducible bounded symmetric domain by Lu and Hu \cite{LuHu}. 
In this paper, we consider a generalization of their works for the weighted Bergman space of a minimal bounded homogeneous domain 
(for the definition of the mininal domain, see \cite{I-Y}, \cite{MM}). 
Indeed, the unit ball, the polydisk and a bounded symmetric domain in its Harish-Chandra realization 
are minimal domains.

Let $\mathcal{U}$ be a minimal bounded homogeneous domain in $\C^d$, $dV(z)$ the Lebesgue measure on 
$\C^d$ and $\mathcal{O}(\mathcal{U})$ the space of all holomorphic functions on $\mathcal{U}$. 
The Bergman kernel $K_{\U} : \U \times \U \longrightarrow \C$ is the reproducing kernel of the Bergman space 
$L^2_a(\U,dV) := L^2(\U,dV) \cap \mathcal{O}(\U)$. 
The Bergman kernel is a useful tool to study properties of composition operators, Toeplitz operators and Hankel operators 
on the Bergman space (for example, see \cite{Zhu1}). 
In this paper, we see that  a necessary and sufficient condition for a bounded composition operator 
to be compact is described by a boundary behavior of the Bergman kernel.

For $\beta \in \R$, let $dV_{\beta}$ denote the measure on $\U$ given by 
$dV_{\beta}(z) := K_{\mathcal{U}}(z,z)^{-\beta} dV(z)$. 
We consider the weighted Bergman space 
$L^p_a(\U,dV_{\beta}) := L^p(\mathcal{U},dV_{\beta}) \cap \mathcal{O}(\mathcal{U})$. 
It is known that there exists a constant $\bmin$ such that $L^p_a(\U,dV_{\beta})$ is non-trivial 
if and only if $\beta > \bmin$ (for explicit expression of $\bmin$, see section \ref{sect22}). 
From now on, we consider non-trivial weighted Bergman spaces.
For a holomorphic map $\varphi$ from $\mathcal{U}$ to $\mathcal{U}$, the composition operator $C_{\varphi}$ is 
a linear operator on $\mathcal{O} (\mathcal{U})$ defined by $C_{\varphi} f := f \circ \varphi$. 
We conside the composition operator on the weighted Bergman space $L^p_a(\U,dV_{\beta})$. 
Using Zhu's technique (see \cite{ZhuComp}) together with an integral formula (see Lemma \ref{CulSiegel}), 
we obtain the following theorem, which is the main theorem of this paper. 
\begin{IntroThm}[Theorem \ref{MainComp}]\ \label{MainComp0}
Assume that 
$C_{\varphi}$ is bounded on $L^q_a(\mathcal{U},dV_{\sita})$ for some $q>0$ and $\sita > \bmin$. 
Then $C_{\varphi}$ is compact on $L^p_a(\mathcal{U},dV_{\naka})$ for any $p>0$ and $\naka > \sita + \bint$ if and only if 
\begin{align*}
 \lim_{ z \rightarrow \partial \mathcal{U}} \frac{K_{\mathcal{U}} \left( \varphi(z), \varphi(z) \right) }{K_{\mathcal{U}}(z,z)} = 0 . 
\end{align*}
\end{IntroThm}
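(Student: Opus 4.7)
The plan is to follow the Zhu--Lu--Hu scheme, but exploit the homogeneous Siegel realization of $\U$ through the integral formula of Lemma \ref{CulSiegel} to compensate for the lack of a transitive compact automorphism group. Both directions will proceed through the pull-back measure $\mu_\varphi(E) := V_{\naka}(\varphi^{-1}(E))$ and the family of normalized Bergman kernels as test functions concentrated near $\partial\U$.

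For necessity, I would test the (assumed) compact operator $C_\varphi$ on the normalized kernels
\[
k_{z,\naka}(w) := \frac{K_\U(w,z)}{K_\U(z,z)^{\gamma}},
\]
with $\gamma$ chosen so that $\|k_{z,\naka}\|_{L^p_a(\U,dV_\naka)}$ is uniformly bounded in $z$. A standard reproducing-kernel calculation shows $k_{z,\naka}\to 0$ uniformly on compact subsets of $\U$, hence weakly in $L^p_a(\U,dV_\naka)$, as $z\to\partial\U$. Compactness then forces $\|C_\varphi k_{z,\naka}\|_{L^p_a(\U,dV_\naka)}\to 0$, and bounding this norm below by a point evaluation at $\varphi(z)$ — using a pointwise estimate of Bergman-type — yields the required ratio condition on the diagonal of the Bergman kernel.

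For sufficiency, the strategy is to upgrade the $(q,\sita)$-boundedness hypothesis to a $(p,\naka)$-vanishing Carleson condition for $\mu_\varphi$. The boundedness assumption at $(q,\sita)$ is equivalent to $\mu_\varphi$ being a $(q,\sita)$-Carleson measure for the Bergman space. Applying Lemma \ref{CulSiegel} on the Siegel model, together with standard Bergman-kernel integral estimates on tubes, one can trade exponents at the price of a weight shift by $\bint$, so that $\mu_\varphi$ is automatically $(p,\naka)$-Carleson for every $p>0$ and $\naka>\sita+\bint$. The boundary ratio condition is then turned into its \emph{vanishing} counterpart: covering a neighbourhood of $\partial\U$ by Bergman balls and using the ratio hypothesis on each ball shows that the Carleson norm of $\mu_\varphi$ restricted to a neighbourhood of $\partial\U$ tends to zero. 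By the usual weakly-null-sequence argument, this vanishing Carleson property is equivalent to compactness of $C_\varphi$ on $L^p_a(\U,dV_\naka)$.

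The main obstacle will be the exponent-shift step inside the sufficiency argument: translating a $(q,\sita)$-Carleson bound on $\mu_\varphi$ into $(p,\naka)$-Carleson bounds for arbitrary $p$, \emph{without} the explicit kernel formulas or compact-group symmetry available in the symmetric case of Lu--Hu. This is precisely where Lemma \ref{CulSiegel} and the invariant $\bint$ are essential: the former reduces integration on $\U$ to a fibered integration over the Siegel cone where kernel estimates are tractable, while $\bint$ records the sharp loss of integrability incurred in passing from the minimal to the Siegel picture. Once these tools are in place, the remainder of the argument is morally the same as in the symmetric case.
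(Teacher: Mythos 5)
Your high-level frame (pull-back measure, Carleson/vanishing-Carleson characterizations, normalized kernels as test functions, Lemma \ref{CulSiegel} as the key input) matches the paper's, but both directions have concrete gaps. In the necessity direction, bounding $\N{C_{\varphi}k_z^{(\naka)}}$ from below by a point evaluation (Lemma \ref{W-Lemma7} applied at the point $z$) produces the \emph{off-diagonal} quantity $\n{K_{\mathcal{U}}(\varphi(z),z)}$, and there is no reverse Cauchy--Schwarz inequality bounding this from below by $K_{\mathcal{U}}(\varphi(z),\varphi(z))^{1/2}K_{\mathcal{U}}(z,z)^{1/2}$; so as written your lower bound does not yield the diagonal ratio. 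The paper avoids this by applying the \emph{adjoint} to the normalized kernels: since $C_{\varphi}^{\ast}K_{z}^{(\naka)}=K_{\varphi(z)}^{(\naka)}$, one gets the exact identity $\N{C_{\varphi}^{\ast}k_z^{(\naka)}}^2=\bigl(K_{\mathcal{U}}(\varphi(z),\varphi(z))/K_{\mathcal{U}}(z,z)\bigr)^{1+\naka}$ (formula (\ref{KBDD})), and compactness of $C_{\varphi}^{\ast}$ kills the left-hand side as $z\to\partial\mathcal{U}$.

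In the sufficiency direction the decisive step is missing, and the role you assign to $\bint$ is not the one it plays. Transferring the Carleson bound from $(q,\sita)$ to $(p,\naka)$ costs nothing: Theorem \ref{BdCp} does it for every $\naka\geq\sita$ with no shift, and the Carleson property is $p$-independent by Theorem \ref{W-Carlprop}; moreover the passage between $\mathcal{U}$ and the Siegel model is an exact isometry, so no integrability is ``lost'' there. What must actually be proved is that the pointwise ratio condition forces $\widetilde{\mu_{\varphi,\naka}}(z)=\N{C_{\varphi}k_z^{(\naka)}}^2\to 0$, and ``covering a neighbourhood of $\partial\mathcal{U}$ by Bergman balls'' does not obviously do this, because $\mu_{\varphi,\naka}(B(w,\ru))=\Volb{\varphi^{-1}(B(w,\ru))}$ involves a preimage that may be spread over all of $\mathcal{U}$. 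The paper instead writes $C_{\varphi}C_{\varphi}^{\ast}=S$ as an integral operator with kernel $K_{\mathcal{U}}^{(\naka)}(\varphi(z),\varphi(w))$, splits $S^{+}=S^{+}_{1,r}+S^{+}_{2,r}+S^{+}_{3,r}$ by cutting off near the boundary, shows $S^{+}_{1,r},S^{+}_{2,r}$ are Hilbert--Schmidt (using boundedness and Lemma \ref{Kvarphi}), and controls $\N{S^{+}_{3,r}}$ by Schur's test with the weight $h(z)=K_{\mathcal{U}}(z,z)^{\naka-\sita}\n{\det J(\Phi,\varphi(z))}^{1+2\sita-\naka}$. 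The Schur inequality (Lemma \ref{CompMainLemma}) is where Corollary \ref{Lp} (hence Lemma \ref{CulSiegel}), the boundedness of $C_{\varphi}$ on $L^2_a(\mathcal{U},dV_{\sita})$, and the hypothesis $\naka>\sita+\bint$ (needed so that $g_z\in L^2_a(\mathcal{U},dV_{\sita})$) all enter, and it is exactly what produces the factor $\bigl(K_{\mathcal{U}}(\varphi(z),\varphi(z))/K_{\mathcal{U}}(z,z)\bigr)^{\naka-\sita}$ that the ratio hypothesis sends to zero. Without an estimate of this kind your sufficiency argument does not close.
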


Since the unit ball and the Harish-Chandra realization of irreducible bounded symmetric domain are 
minimal domains, 
Theorem \ref{MainComp0} is a generalization of \cite[Theorem 4.1]{ZhuComp} and \cite[Theorem]{LuHu} 
(see section \ref{sect60}). 
Similarly to the case of them, 
the assumption that $C_{\varphi}$ is a bounded operator on $L^p_a(\mathcal{U},dV_{\sita})$ for some 
$\sita > \bmin$ 
is needed only for the ``if'' part of Theorem \ref{MainComp0}. 

To prove Theorem \ref{MainComp0} for the case that $\U = \mathbb{B}^d$, Zhu used Schur's theorem. 
To apply Zhu's method, it is important to find a positive function satisfying a certain inequality. 
Zhu found this function by using Forelli-Rudin inequality (see \cite[Lemma 2.6]{ZhuComp}). 
Instead, we find the function by using Lemma \ref{CulSiegel}. 
By \cite{VGS}, there exists a biholomorphic map $\Phi$ from the bounded homogeneous domain $\U$ onto 
a homogeneous Siegel domain $\mathcal{D}$. In Lemma \ref{CulSiegel}, we shall consider the integral 
\begin{align*}
\int_{\mathcal{U}} \n{K_{\mathcal{U}} (z,z^{\prime})}^{1+\alpha}   \n{\det J(\Phi,z^{\prime})}^{1+2\beta-\alpha}  \, dV_{\beta}(z^{\prime}) ,
\end{align*}
where $J(\Phi,z^{\prime})$ denotes the complex Jacobi matrix of $\Phi$ at $z^{\prime}$. 
The integral converges if and only if $\naka > \bmin$ and $\alpha > \naka + \bint$, 
where $\bint$ is a constant defined from $\U$ (see section \ref{sect22}).

Before the proof of theorem \ref{MainComp0}, we show that the boundedness of $C_{\varphi}$ 
on $L^p_a(\mathcal{U},dV_{\beta})$ is described 
in terms of Carleson measures. It is easy to see that $C_{\varphi}$ is a bounded operator on $L^p_a(\U,dV_{\beta})$ if and only if the pull-back measure $d\mu_{\varphi,\beta}$ of $dV_{\beta}$ 
induced by $\varphi$ is a Carleson measure for $L^p_a(\U,dV_{\beta})$ (see section \ref{sect41}).
Using properties of Carleson measures, we obtain the following theorem. 
\begin{IntroThm}[Theorems \ref{BdCp} and \ref{CptCp}]\ \label{BdAndCptCp}
If $C_{\varphi}$ is a bounded (resp. compact) operator on $L^q_{a}(\U,dV_{\sita})$ for some $q>0$ and 
$\sita> \bmin$, then $C_{\varphi}$ is a bounded (resp. compact) operator on $L^p_{a}(\U,dV_{\naka})$ 
for any $p>0$ and $\naka \geq \sita$. 
\end{IntroThm}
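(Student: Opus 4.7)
The plan is to recast both statements as assertions about Carleson measures and exploit a $p$-independent geometric characterization of such measures. By the reduction recalled in the introduction, $C_{\varphi}$ is bounded (resp.\ compact) on $L^p_a(\U, dV_{\beta})$ iff the pull-back measure $d\mu_{\varphi,\beta}$ is a (resp.\ vanishing) $(p,\beta)$-Carleson measure. On a bounded homogeneous domain the Carleson property admits a geometric reformulation in terms of Bergman-metric balls $B(w,r)$: for some (equivalently, every) fixed $r>0$, $\mu$ is $(p,\beta)$-Carleson iff there is a constant $C$ with $\mu(B(w,r)) \leq C\, V_{\beta}(B(w,r))$ for all $w\in\U$, and the $\beta$-volume satisfies $V_{\beta}(B(w,r)) \asymp K_{\U}(w,w)^{-(\beta+1)}$. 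This criterion is manifestly independent of $p$, so once I establish it at weight $\naka$, boundedness on $L^p_a(\U, dV_{\naka})$ follows for every $p>0$; the problem therefore reduces to passing from weight $\sita$ to weight $\naka \geq \sita$.

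For the weight change I would first extract a Schwarz--Pick estimate for the Bergman kernel from the $\sita$-Carleson hypothesis. Since the Bergman pseudodistance is holomorphically contractive on $\U$, one has $B(z,r) \subseteq \varphi^{-1}(B(\varphi(z),r))$ for every $z \in \U$. Applying the $\sita$-Carleson bound to the right-hand side yields
\begin{align*}
K_{\U}(z,z)^{-(\sita+1)} \asymp V_{\sita}(B(z,r)) \leq V_{\sita}\bigl(\varphi^{-1}(B(\varphi(z),r))\bigr) \leq C\, V_{\sita}(B(\varphi(z),r)) \asymp K_{\U}(\varphi(z),\varphi(z))^{-(\sita+1)},
\end{align*}
which delivers $K_{\U}(\varphi(z),\varphi(z)) \leq C'\, K_{\U}(z,z)$ for every $z \in \U$, with a constant depending only on the Carleson bound.

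Armed with this, I would verify the $\naka$-Carleson condition for $\mu_{\varphi,\naka}$ by the direct rewriting
\begin{align*}
\mu_{\varphi,\naka}(B(w,r)) = \int_{\varphi^{-1}(B(w,r))} K_{\U}(z,z)^{-(\naka-\sita)}\, dV_{\sita}(z).
\end{align*}
For $z \in \varphi^{-1}(B(w,r))$ the bounded oscillation of $K_{\U}$ on Bergman balls gives $K_{\U}(\varphi(z),\varphi(z)) \asymp K_{\U}(w,w)$, and the Schwarz--Pick estimate above then forces $K_{\U}(z,z) \geq c\, K_{\U}(w,w)$. Consequently
\begin{align*}
\mu_{\varphi,\naka}(B(w,r)) \leq C\, K_{\U}(w,w)^{-(\naka-\sita)}\, \mu_{\varphi,\sita}(B(w,r)) \leq C'\, K_{\U}(w,w)^{-(\naka+1)},
\end{align*}
which is exactly the Carleson condition at weight $\naka$. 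For the compactness half of the theorem, the same chain of inequalities applies with ``uniformly bounded in $w$'' replaced by ``vanishing as $w \to \partial \U$'': the factor $K_{\U}(w,w)^{-(\naka-\sita)}$ enters coherently in both $\mu_{\varphi,\naka}(B(w,r))$ and $V_{\naka}(B(w,r))$, so the ratio $\mu_{\varphi,\naka}(B(w,r))/V_{\naka}(B(w,r))$ is dominated by the vanishing $\sita$-Carleson ratio.

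The main obstacle is the Schwarz--Pick estimate $K_{\U}(\varphi(z),\varphi(z)) \leq C\, K_{\U}(z,z)$. This is \emph{not} free for an arbitrary holomorphic self-map of $\U$---biholomorphisms of $\U$ scale $K_{\U}$ by a Jacobian factor---but is a genuine consequence of the boundedness hypothesis, extracted above by combining the $\sita$-Carleson condition with the contractivity of the Bergman pseudodistance. Modulo this key point, the remaining inputs---the submean asymptotic $V_{\beta}(B(w,r))\asymp K_{\U}(w,w)^{-(\beta+1)}$ and the $p$-independent geometric characterization of Carleson measures---are standard on bounded homogeneous domains.
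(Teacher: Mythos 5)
Your overall strategy matches the paper's: reduce to (vanishing) Carleson measures, use the $p$-independent characterization (conditions (iii)/(iv) of Theorems \ref{W-Carlprop} and \ref{W-VanCarl}, together with $\Volb{B(w,r)}\asymp K_{\U}(w,w)^{-(1+\beta)}$ from Lemma \ref{WVOL}), and carry out the weight change $\sita\to\naka$ by writing $dV_{\sita}(w)=K_{\U}(w,w)^{\naka-\sita}\,dV_{\naka}(w)$ and invoking the pointwise estimate $K_{\U}(\varphi(z),\varphi(z))\leq C\,K_{\U}(z,z)$. The paper phrases the comparison as $\Fbel{\sita}(z)\geq C\,F_{\rho,\naka}(z)$ (Berezin symbol at weight $\sita$ dominating the localized transform at weight $\naka$), whereas you compare averaging functions $\widehat{\mu_{\varphi,\naka}}\leq C\,\widehat{\mu_{\varphi,\sita}}$; these are equivalent formulations and your computation of $\mu_{\varphi,\naka}(B(w,r))$ is correct.

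The one step that does not hold up as written is your derivation of the key estimate $K_{\U}(\varphi(z),\varphi(z))\leq C\,K_{\U}(z,z)$. You justify the inclusion $B(z,r)\subseteq\varphi^{-1}(B(\varphi(z),r))$ by asserting that the Bergman pseudodistance is holomorphically contractive. That is not a general fact: unlike the Carath\'eodory and Kobayashi distances, the Bergman distance is only invariant under biholomorphisms, not distance-decreasing under arbitrary holomorphic self-maps. On a bounded homogeneous domain one can salvage this up to a multiplicative constant (all $\mathrm{Aut}(\U)$-invariant metrics are mutually comparable, so $d_{\U}(\varphi(z),\varphi(w))\leq C\,d_{\U}(z,w)$ via the Kobayashi distance, giving $\varphi(B(z,r))\subseteq B(\varphi(z),Cr)$, which suffices since the ball estimates hold for every fixed radius), but this comparison of invariant metrics is an extra ingredient you would have to state and prove. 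The paper avoids the issue entirely: Lemma \ref{Kvarphi} computes
\begin{align*}
\N{C_{\varphi}^{\ast}k_{z}^{(\sita)}}^{2}_{L^{2}(dV_{\sita})}=\left(\frac{K_{\U}(\varphi(z),\varphi(z))}{K_{\U}(z,z)}\right)^{1+\sita}
\end{align*}
and bounds the left side by $\N{C_{\varphi}}^{2}$, which is both shorter and does not require any metric comparison. I recommend either substituting that argument or explicitly proving the quasi-contractivity of the Bergman distance on $\U$; with that repair the rest of your proof is sound.
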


By Theorem \ref{BdAndCptCp}, the assumption of Theorem \ref{MainComp0} implies that 
$C_{\varphi}$ is bounded on $L^q_a(\mathcal{U},dV_{\sita})$ for any $q>0$ and $\beta \geq \sita$. 
We use the boundedness of $C_{\varphi}$ on $L^2_a(\mathcal{U},dV_{\beta})$ and $L^2_a(\mathcal{U},dV_{\sita})$ 
in section \ref{sect-Lemma}. 

Let us explain the organization of this paper. 
In section \ref{sect20}, we review properties of the weighted Bergman space of a minimal bounded homogeneous 
domain and composition operators on the space. 
Theorem \ref{Ishi} plays an important role in this section. 
In section \ref{sect30}, we show some properties of Carleson measures and vanishing Carleson measures for 
the weighted Bergman space of a minimal bounded homogeneous domain (Theorems \ref{W-Carlprop} and \ref{W-VanCarl}). 
Using them, we prove properties of the boundedness and compactness of $C_{\varphi}$ in section \ref{sect40} (Theorems \ref{BdCp} and \ref{CptCp}). 
In section \ref{sect4.00}, we show an important equality (Lemma \ref{CulSiegel}). 
By using Lemma \ref{CulSiegel}, we prove the characterization of the compactness of 
the composition operator (Theorem \ref{MainComp0}) in section \ref{sect50}. 
In section \ref{sect60}, we apply Theorem \ref{MainComp0} for the case that 
$\mathcal{U}$ is the unit ball, bounded symmetric domain in its Harish-Chandra realization, the polydisk and 
the representative domain of the tube domain over Vinberg cone, which is an example of nonsymmetric bounded homogeneous domain. 
These domains are minimal domains with a center $0$.

\section{Preliminaries} \label{sect20}
\subsection{Weighted Bergman spaces of a minimal bounded homogeneous domain} \label{sect21}
Let $D$ be a bounded domain in $\C^d$. 
We say that $D$ is a minimal domain with a center $t \in D$ if the following condition is satisfied: 
for every biholomorphism $\psi : D \longrightarrow D^{\prime}$ with $\det J(\psi , t)=1$, we have 
\begin{eqnarray*}
{\rm Vol} (D^{\prime}) \geq {\rm Vol} (D)  .
\end{eqnarray*}
We see that $D$ is a minimal domain with a center $t$ if and only if 
$$K_{D}(z,t) = \frac{1}{{\rm Vol }(D)}$$
for any $z \in D$ (see \cite[Proposition 3.6]{I-K} or \cite[Theorem 3.1]{MM}). 
For example, the unit disk $\mathbb{D}$ and the unit ball $\mathbb{B}^d$ are minimal domains with a center $0$. 

We fix a minimal bounded homogeneous domain $\mathcal{U}$ with a center $\cent$. 
We denote by $K_{\mathcal{U}}^{(\beta)}$ the reproducing kernel of $L^2_a(\U,dV_{\beta})$. 
It is known that $K_{\mathcal{U}}^{(\beta)}(z,w) = C_{\beta} K_{\mathcal{U}}(z,w)^{1+\beta}$ 
for some positive constant $C_{\beta}$. 
For $z \in \mathcal{U}$, we denote by $k_z^{(\beta)}$ the normalized reproducing kernel of $L^2_a(\U,dV_{\beta})$, that is, 
\begin{align}
   k_z^{(\beta)} (w) := \frac{K_{\mathcal{U}}^{(\beta)}(w,z)}{K_{\mathcal{U}}^{(\beta)}(z,z)^{\frac{1}{2}}} 
    = \sqrt{C_{\beta}} \left( \frac{K_{\mathcal{U}}(w,z)}{K_{\mathcal{U}}(z,z)^{\frac{1}{2}}} \right)^{1+\beta} . \label{WBergKer}
\end{align}
For any Borel set $E$ in $\U$, we define
$$  \Volb{E} := \int_{E} dV_{\beta}(w) .$$
Let $d_{\U}(\cdot,\cdot)$ be the Bergman distance on $\mathcal{U}$. 
For any $z \in \mathcal{U}$ and $r>0$, let
$$  B(z,r) := \{ w \in \mathcal{U} \mid d_{\U}(z,w) \leq r \} $$
be the Bergman metric disk with center $z$ and radius $r$. 

%
In \cite{I-Y}, we proved the following theorem. 
\begin{Thm}[\mbox{\cite[Theorem A]{I-Y}}] \label{Ishi} 
For any $\ru>0$, there exists $C_{\ru}>0$ such that 
$$  C_{\ru}^{-1} \leq \n{\frac{K_{\mathcal{U}}(z,a)}{K_{\mathcal{U}}(a,a)} }\leq C_{\ru} $$
for all $z,a \in \mathcal{U}$ such that $d_{\U} (z,a) \leq \ru$. 
\end{Thm}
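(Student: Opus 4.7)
The plan is to use homogeneity of $\U$ to move $a$ to the center $\cent$ and then exploit the minimality of $\U$ at $\cent$ to collapse the off-diagonal kernel $K_{\U}(z, a)$ into a quotient of Jacobian determinants. Pick $\psi \in \mathrm{Aut}(\U)$ with $\psi(a) = \cent$; since $\psi$ is a Bergman isometry, $\psi(z) \in B(\cent, \ru)$. Combining the standard transformation law
\[
K_{\U}(z, a) = K_{\U}(\psi(z), \psi(a)) \det J(\psi, z) \overline{\det J(\psi, a)}
\]
with the minimal-domain identity $K_{\U}(\psi(z), \cent) = K_{\U}(\cent, \cent)$ and with $K_{\U}(a, a) = K_{\U}(\cent, \cent) \n{\det J(\psi, a)}^{2}$, the quotient simplifies to
\[
\frac{K_{\U}(z, a)}{K_{\U}(a, a)} = \frac{\det J(\psi, z)}{\det J(\psi, a)}.
\]

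Taking absolute values and applying $\n{\det J(\psi, w)}^{2} = K_{\U}(w, w)/K_{\U}(\psi(w), \psi(w))$ at $w = z$ and $w = a$ yields
\[
\n{\frac{K_{\U}(z, a)}{K_{\U}(a, a)}}^{2} = \frac{K_{\U}(z, z) \, K_{\U}(\cent, \cent)}{K_{\U}(a, a) \, K_{\U}(\psi(z), \psi(z))}.
\]
Since $\psi(z) \in B(\cent, \ru)$, and the closed Bergman ball $B(\cent, \ru)$ is compact in $\U$ (by completeness of the Bergman metric on bounded homogeneous domains) with $K_{\U}(\cdot, \cdot)$ continuous and strictly positive on the diagonal, the factor $K_{\U}(\cent, \cent)/K_{\U}(\psi(z), \psi(z))$ is bounded above and below by positive constants depending only on $\ru$. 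The proof therefore reduces to estimating the remaining factor $K_{\U}(z, z)/K_{\U}(a, a)$ for $d_{\U}(z, a) \leq \ru$.

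This diagonal comparison is the main obstacle. I would attack it by integrating the exact $1$-form $d \log K_{\U}(\zeta, \zeta)$ along a Bergman-length geodesic from $a$ to $z$: if the Bergman-metric norm of $\partial \log K_{\U}(\zeta, \zeta)$ can be shown to be globally bounded on $\U$ --- which, for a bounded homogeneous domain, is plausible by combining the transitive action with the compactness of the isotropy subgroup fixing $\cent$ --- then $\n{\log K_{\U}(z, z) - \log K_{\U}(a, a)} \leq M \cdot d_{\U}(z, a) \leq M \ru$ for some constant $M$, and exponentiation gives the desired two-sided bound. If this direct argument fails, an alternative is to transfer the problem via the Vinberg--Gindikin--Piatetski-Shapiro biholomorphism $\Phi \colon \U \to \mathcal{D}$ onto the homogeneous Siegel domain $\mathcal{D}$: there the diagonal Bergman kernel is an explicit generalized power function of the ``height'' variable in the underlying homogeneous cone, and the required comparison can be read off from the structure of the cone together with the explicit description of Bergman balls in $\mathcal{D}$.
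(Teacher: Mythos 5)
First, note that the paper itself offers no proof of this statement: it is quoted verbatim from \cite[Theorem A]{I-Y}, so there is no internal argument to compare against and your proposal has to stand on its own.

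Your reduction is correct and quite clean: with $\psi\in\mathrm{Aut}(\U)$, $\psi(a)=\cent$, the transformation law together with the minimality identity $K_{\U}(\,\cdot\,,\cent)\equiv 1/\Vol{\U}$ does give $K_{\U}(z,a)/K_{\U}(a,a)=\det J(\psi,z)/\det J(\psi,a)$ and hence the exact identity $\n{K_{\U}(z,a)/K_{\U}(a,a)}^{2}=\bigl(K_{\U}(z,z)/K_{\U}(a,a)\bigr)\cdot\bigl(K_{\U}(\cent,\cent)/K_{\U}(\psi(z),\psi(z))\bigr)$, and the second factor is controlled by compactness of $B(\cent,\ru)$ (homogeneous Riemannian manifolds are complete, so Hopf--Rinow applies). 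The genuine gap is the remaining diagonal comparison $C^{-1}\leq K_{\U}(z,z)/K_{\U}(a,a)\leq C$ for $d_{\U}(z,a)\leq\ru$: by your own identity this is \emph{equivalent} to the theorem, so it is the entire content, and the argument you offer for it does not go through as stated. The quantity $\N{\partial\log K_{\U}(\zeta,\zeta)}_{b}$ is \emph{not} invariant under $\mathrm{Aut}(\U)$: under $\psi$ one has $\partial\log K_{\U}(z,z)={}^{t}J(\psi,z)\,(\partial\log K_{\U})(\psi(z),\psi(z))+\partial\log\det J(\psi,z)$, and while the first term transforms isometrically for the Bergman metric, the second does not vanish for non-affine automorphisms. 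So ``transitive action plus compact isotropy'' does not bound this norm; the obstruction sits in the transvections, not the isotropy group, and compactness of the isotropy subgroup is irrelevant to it.

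Your fallback --- transferring to the homogeneous Siegel domain $\Siegel$ via $\Phi$ --- is indeed the right idea (and is essentially what \cite{I-Y} does): on $\Siegel$ the affine group acts simply transitively with \emph{constant} Jacobians, so $\N{\partial\log K_{\Siegel}}_{b}$ really is constant there. But the transfer back to $\U$ reintroduces the factor $\n{\det J(\Phi,\cdot)}^{2}=K_{\U}(\cdot,\cdot)/K_{\Siegel}(\Phi(\cdot),\Phi(\cdot))$, whose oscillation over a Bergman ball is exactly what you are trying to bound, so the naive transfer is circular. What saves the day is again minimality: $K_{\U}(z,\cent)=\mathrm{const}$ forces $\det J(\Phi,z)=c/K_{\Siegel}(\Phi(z),\Phi(\cent))$, reducing everything to a bound on $\N{\partial_{\zeta}\log K_{\Siegel}(\zeta,\zeta_{0})}_{b_{\Siegel}}$ uniformly in $\zeta$, i.e.\ to estimates of the compound power functions on the underlying cone against the Bergman metric. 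That is a genuine computation, not something that can be ``read off,'' and it is precisely the technical heart of \cite[Theorem A]{I-Y}. As it stands, your proposal is an honest reduction plus a heuristic for the main estimate, not a proof.
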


From Theorem \ref{Ishi}, we obtain that $K_{\mathcal{U}}(\cdot,a)$ is a bounded function on $\mathcal{U}$ 
for each $a \in \mathcal{U}$ (see \cite[Proposition 6.1]{I-Y}). 
Since ${\rm span}\langle K^{(\beta)}_{\mathcal{U}}(\cdot,a) \rangle$ is dense in $L^2_a(\mathcal{U}, dV_{\beta})$, 
we see that $H^{\infty}(\mathcal{U})$ is dense in $L^2_a(\mathcal{U}, dV_{\beta})$, 
where $H^{\infty}(\mathcal{U})$ is the set of all bounded holomorphic functions on $\mathcal{U}$. 

Moreover, we obtain useful lemmas from Theorem \ref{Ishi}. 
First, we deduce 
\begin{align}
  C_{\ru}^{-2} \leq \frac{K_{\mathcal{U}}(z,z)}{K_{\mathcal{U}}(a,a)} \leq C_{\ru}^2 \label{ineq2}
 \end{align}
for all $z,a \in \mathcal{U}$ such that $d_{\U} (z,a) \leq \ru$. 
On the other hand, we have  
\begin{align*}
C^{-1} K_{\U}(a,a)^{-1} \leq \n{\frac{K_{\U}(z,a)}{K_{\U}(a,a)}}^2 \Vol{B(a,\ru)} \leq C K_{\U}(a,a)^{-1} . 
\end{align*}
by \cite[Lemma 3.3]{Yamaji}. Therefore, we have 
\begin{align}
C^{-1} K_{\U}(a,a)^{-1} \leq \Vol{B(a,\ru)} \leq C K_{\U}(a,a)^{-1}  \label{Nweight2}
\end{align}
by Theorem \ref{Ishi}. 
Therefore, we have the following lemma.

\begin{Lem} \label{WVOL}
There exists a positive constant $C$ such that 
\begin{align}
  C^{-1} K_{\U}(a,a)^{-(1+\beta)} \leq  \Volb{B(a,\ru)} \leq C  K_{\U}(a,a)^{-(1+\beta)}  \label{Nweight}
\end{align}
for all $a \in \mathcal{U}$.
\end{Lem}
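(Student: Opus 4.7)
The plan is essentially to unwind the definition $dV_{\beta}(w)=K_{\U}(w,w)^{-\beta}\,dV(w)$ and then combine the pointwise estimate \eqref{ineq2} with the volume estimate \eqref{Nweight2}, both of which have already been recorded just above the statement.

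Concretely, I would write
\begin{align*}
\Volb{B(a,\ru)} \;=\; \int_{B(a,\ru)} K_{\U}(w,w)^{-\beta}\,dV(w).
\end{align*}
For any $w\in B(a,\ru)$ we have $d_{\U}(w,a)\le \ru$, so \eqref{ineq2} gives
\begin{align*}
C_{\ru}^{-2}\,K_{\U}(a,a) \;\le\; K_{\U}(w,w) \;\le\; C_{\ru}^{2}\,K_{\U}(a,a).
\end{align*}
Raising to the power $-\beta$ (with a case split on the sign of $\beta$, which only changes the constant to $C_{\ru}^{2\n{\beta}}$), we obtain a two-sided bound
\begin{align*}
C_{\ru}^{-2\n{\beta}}\,K_{\U}(a,a)^{-\beta} \;\le\; K_{\U}(w,w)^{-\beta} \;\le\; C_{\ru}^{2\n{\beta}}\,K_{\U}(a,a)^{-\beta}
\end{align*}
uniformly in $w\in B(a,\ru)$. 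Integrating over $B(a,\ru)$ therefore yields
\begin{align*}
\Volb{B(a,\ru)} \;\asymp\; K_{\U}(a,a)^{-\beta}\,\Vol{B(a,\ru)},
\end{align*}
and finally \eqref{Nweight2} upgrades $\Vol{B(a,\ru)}$ to $K_{\U}(a,a)^{-1}$ (up to multiplicative constants), which gives the claimed $K_{\U}(a,a)^{-(1+\beta)}$.

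There is no real obstacle here: the lemma is a bookkeeping consequence of Theorem \ref{Ishi} (applied to the diagonal, i.e.\ \eqref{ineq2}) together with the unweighted volume estimate \eqref{Nweight2} from \cite[Lemma 3.3]{Yamaji}. The only item demanding a drop of attention is that $\beta$ is allowed to be negative (we only assume $\beta>\bmin$, which can be below $0$), so the exponentiation step should be written so that the constant absorbs $\n{\beta}$ rather than $\beta$; once this is done, pulling $K_{\U}(a,a)^{-\beta}$ outside the integral and invoking \eqref{Nweight2} completes the argument in essentially one line.
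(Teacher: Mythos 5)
Your proof is correct and follows exactly the same route as the paper: rewrite $\Volb{B(a,\ru)}$ as $\int_{B(a,\ru)} K_{\U}(w,w)^{-\beta}\,dV(w)$, use \eqref{ineq2} to compare $K_{\U}(w,w)^{-\beta}$ with $K_{\U}(a,a)^{-\beta}$ on the Bergman disk, and then invoke \eqref{Nweight2}. Your explicit remark about the sign of $\beta$ (absorbing $\n{\beta}$ into the constant) is a small point of care that the paper leaves implicit, but it changes nothing in substance.
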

\begin{proof}
Since
\begin{align*}
\Volb{B(a,\ru)} 
  &= \int_{B(a,\ru)} K_{\U}(w,w)^{-\beta} dV(w) ,  
\end{align*}
we have 
\begin{align}
C^{-1} K_{\U}(a,a)^{-\beta}  \Vol{B(a,\ru)} \leq \Volb{B(a,\ru)} 
\leq C K_{\U}(a,a)^{-\beta}  \Vol{B(a,\ru)}. \label{Nweight3}
\end{align}
by (\ref{ineq2}). We obtain (\ref{Nweight}) from (\ref{Nweight2}) and (\ref{Nweight3}).
\end{proof}

By (\ref{WBergKer}), Lemma \ref{WVOL} and Theorem \ref{Ishi}, we have the following lemma. 
\begin{Lem}[cf. \mbox{\cite[Lemma 1]{Zhu1}}] \label{W-Lemma1} 
There exists a positive constant $\Kr$ such that 
$$  \Kr^{-1} \leq \n{k_a^{(\beta)} (z)}^2 \Volb{B(a,\ru)} \leq \Kr $$
for all $a \in \mathcal{U}$ and $z \in B(a,\ru)$.
\end{Lem}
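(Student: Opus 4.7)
The plan is to just combine formula (\ref{WBergKer}) with Theorem \ref{Ishi} and Lemma \ref{WVOL}; no genuinely new ingredient enters.

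First I would rewrite the modulus squared of the normalized kernel in a form where the dependence on $a$ is visible. From (\ref{WBergKer}),
\begin{align*}
\n{k_a^{(\beta)}(z)}^2 = C_\beta \, K_{\U}(a,a)^{1+\beta} \n{\frac{K_{\U}(z,a)}{K_{\U}(a,a)}}^{2(1+\beta)} .
\end{align*}
Next I would apply Theorem \ref{Ishi}: for $z \in B(a,\ru)$ we have $C_{\ru}^{-1} \leq \n{K_{\U}(z,a)/K_{\U}(a,a)} \leq C_{\ru}$. Raising to the power $2(1+\beta)$ (positive, since $\beta > \bmin$ and in any case one just picks up a $\max(\pm)$) and letting $C'_\ru := C_\ru^{2|1+\beta|}$ yields
\begin{align*}
(C'_\ru)^{-1} C_\beta \, K_{\U}(a,a)^{1+\beta} \leq \n{k_a^{(\beta)}(z)}^2 \leq C'_\ru C_\beta \, K_{\U}(a,a)^{1+\beta}
\end{align*}
for every $z \in B(a,\ru)$.

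Finally I would multiply through by $\Volb{B(a,\ru)}$ and invoke Lemma \ref{WVOL}, which gives $\Volb{B(a,\ru)} \asymp K_{\U}(a,a)^{-(1+\beta)}$ uniformly in $a$. The factor $K_{\U}(a,a)^{1+\beta}$ then cancels, leaving two-sided bounds by a constant $\Kr$ independent of $a$ and of $z \in B(a,\ru)$, which is exactly the statement.

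I do not see any real obstacle: the lemma is an almost immediate corollary of what has just been proved, and the only item requiring any care is keeping track of the sign of $1+\beta$ when raising the kernel ratio to that power, which is handled by absorbing it into the constant. So the write-up is essentially a two-line display.
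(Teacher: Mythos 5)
Your proposal is correct and follows exactly the route the paper intends: the paper gives no written proof, stating only that the lemma follows ``by (\ref{WBergKer}), Lemma \ref{WVOL} and Theorem \ref{Ishi}'', and your argument is precisely that combination, with the ratio bound from Theorem \ref{Ishi} cancelling the factor $K_{\U}(a,a)^{\pm(1+\beta)}$ against the volume estimate of Lemma \ref{WVOL}. No gaps; the care you take with the sign of $1+\beta$ is a harmless extra precaution.
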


Lemma \ref{WVOL}  and (\ref{Nweight2}) yield the following;
\begin{Lem}[cf. \mbox{\cite[Lemma 2]{Zhu1}}] \label{W-Vol} 
There exists a positive constant $C$ such that 
$$  C^{-1} \Volb{B(a,\ru)} \leq \Volb{B(z,\ru)} \leq C \Volb{B(a,\ru)} $$
for all $a \in \mathcal{U}$ and $z \in B(a,\ru)$.
\end{Lem}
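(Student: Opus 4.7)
The plan is to reduce the statement to a Bergman kernel comparison and then invoke inequality (\ref{ineq2}). Applying Lemma \ref{WVOL} at the point $a$ yields
\begin{align*}
C^{-1} K_{\mathcal{U}}(a,a)^{-(1+\beta)} \leq \Volb{B(a,\ru)} \leq C K_{\mathcal{U}}(a,a)^{-(1+\beta)},
\end{align*}
and applying the same lemma at $z$ yields the analogous two-sided bound with $a$ replaced by $z$. Dividing these two estimates, the desired inequality for $\Volb{B(z,\ru)}/\Volb{B(a,\ru)}$ reduces to showing that $K_{\mathcal{U}}(z,z)/K_{\mathcal{U}}(a,a)$ is bounded above and below by constants depending only on $\rho$, uniformly for $z \in B(a,\ru)$.

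This last ratio estimate is exactly (\ref{ineq2}), which was already deduced from Theorem \ref{Ishi}. Combining the constants from the two applications of Lemma \ref{WVOL} with the constant $C_\rho^2$ from (\ref{ineq2}) into a single constant $C$ completes the argument.

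I do not expect any real obstacle: the statement is essentially the assertion that, up to uniform constants, $\Volb{B(\cdot,\ru)}$ behaves like $K_{\mathcal{U}}(\cdot,\cdot)^{-(1+\beta)}$, which is almost constant on Bergman balls of fixed radius. The proof is a direct chaining of the two results quoted in the hint, and no further geometric input on $\mathcal{U}$ is needed beyond Theorem \ref{Ishi}.
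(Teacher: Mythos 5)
Your proof is correct and follows essentially the same route as the paper, which likewise obtains the lemma by combining the two-sided estimate $\Volb{B(\cdot,\ru)} \asymp K_{\mathcal{U}}(\cdot,\cdot)^{-(1+\beta)}$ from Lemma \ref{WVOL} with the kernel comparison (\ref{ineq2}). Note only that since $1+\beta$ may have either sign, it is the two-sidedness of (\ref{ineq2}) that makes the final step work, which your argument implicitly uses correctly.
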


We have the following estimate. 
\begin{Lem}[cf. \mbox{\cite[Lemma 5]{Zhu1}}]  \label{W-Lemma7}
There exists a positive constant $C$ such that 
\begin{align}
 \n{f(z)}^p 
  \leq \frac{C}{\Volb{B(z,\ru)}}  \int_{B(z,\ru)} \n{f(w)}^p \, dV_{\beta}(w) \label{WWW}
\end{align}
for all $f \in \mathcal{O} (\mathcal{U}), p >0$ and $z \in \mathcal{U}$. 
\end{Lem}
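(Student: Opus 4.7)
My plan is to exploit the homogeneity of $\U$ to reduce the estimate to the sub-mean-value inequality for plurisubharmonic functions on a single Euclidean ball centred at $\cent$. For holomorphic $f$ and any $p > 0$ the function $\n{f}^p = \exp(p \log \n{f})$ is plurisubharmonic on $\U$, and since $B(\cent,\ru)$ is an open neighbourhood of $\cent$, one can fix once and for all a radius $r_0 > 0$ such that the closed Euclidean ball $\overline{E(\cent,r_0)}$ is contained in $B(\cent,\ru)$. The standard sub-mean-value property then yields
\[ \n{h(\cent)}^p \leq \frac{1}{\Vol\bigl(E(\cent,r_0)\bigr)} \int_{E(\cent,r_0)} \n{h(w)}^p \, dV(w) \]
for every $h \in \mathcal{O}(\U)$.

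For $z \in \U$, choose a biholomorphism $\varphi_z : \U \to \U$ with $\varphi_z(\cent) = z$ (available by homogeneity). Since the Bergman distance is biholomorphically invariant, $\varphi_z(B(\cent,\ru)) = B(z,\ru)$, hence $\varphi_z(E(\cent,r_0)) \subset B(z,\ru)$. Applying the preceding inequality to $h = f \circ \varphi_z$ and changing variables $\zeta = \varphi_z(w)$ gives
\[ \n{f(z)}^p \leq \frac{1}{\Vol\bigl(E(\cent,r_0)\bigr)} \int_{B(z,\ru)} \n{f(\zeta)}^p \, \n{\det J(\varphi_z^{-1},\zeta)}^2 \, dV(\zeta). \]

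It remains to absorb the Jacobian factor and the unweighted measure into $\Volb{B(z,\ru)}$. By the Bergman kernel transformation law, $\n{\det J(\varphi_z^{-1},\zeta)}^2 = K_\U(\zeta,\zeta) / K_\U(\varphi_z^{-1}(\zeta),\varphi_z^{-1}(\zeta))$. For $\zeta \in B(z,\ru)$ we have $\varphi_z^{-1}(\zeta) \in B(\cent,\ru)$, and (\ref{ineq2}) bounds the numerator above by $C K_\U(z,z)$ and the denominator below by $C^{-1} K_\U(\cent,\cent)$. Since $K_\U(\cent,\cent) = 1/\Vol{\U}$ is a fixed positive constant (by the minimal-domain property), this yields $\n{\det J(\varphi_z^{-1},\zeta)}^2 \leq C' K_\U(z,z)$. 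A second application of (\ref{ineq2}) gives $dV(\zeta) = K_\U(\zeta,\zeta)^\beta dV_\beta(\zeta) \leq C'' K_\U(z,z)^\beta dV_\beta(\zeta)$ on $B(z,\ru)$. Combining these with Lemma \ref{WVOL}, which supplies $K_\U(z,z)^{1+\beta} \leq C''' \Volb{B(z,\ru)}^{-1}$, delivers (\ref{WWW}) with a constant independent of $z$.

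The main obstacle, and the reason for the homogeneity detour, is that a naive Euclidean sub-mean-value around $z$ would employ a ball whose radius shrinks as $z \to \partial \U$, destroying uniformity in $z$. Anchoring the sub-mean-value at the fixed interior point $\cent$ and pushing all the $z$-dependence into the Jacobian sidesteps this issue, and Theorem \ref{Ishi} (via (\ref{ineq2})) is precisely the tool needed to tame that Jacobian factor together with the weight $K_\U(\cdot,\cdot)^{-\beta}$.
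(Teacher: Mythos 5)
Your argument is correct, but it follows a genuinely different route from the paper's. The paper's proof is very short: it invokes \cite[Lemma 3.5]{Yamaji} as a black box for the unweighted sub-mean-value inequality $\n{f(z)}^p \leq C\,\Vol{B(z,\ru)}^{-1}\int_{B(z,\ru)}\n{f}^p\,dV$ over the Bergman metric ball, and then converts $dV$ to $dV_{\beta}$ using (\ref{ineq2}) and passes from $\Vol{B(z,\ru)}$ to $\Volb{B(z,\ru)}$ via (\ref{Nweight3}). You instead reprove the core estimate from first principles: plurisubharmonicity of $\n{f}^p$ gives the sub-mean-value inequality on a fixed Euclidean ball at the center $\cent$, homogeneity transports it to an arbitrary $z$, and the transformation law for the Bergman kernel together with (\ref{ineq2}) tames the Jacobian; you then finish with Lemma \ref{WVOL} in place of (\ref{Nweight3}) (an equivalent bookkeeping choice). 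This is almost certainly how the cited lemma of \cite{Yamaji} is itself proved, so your write-up buys self-containedness at the cost of length, and it correctly identifies the key point --- that anchoring the mean-value inequality at the fixed interior point $\cent$ is what gives uniformity in $z$, with Theorem \ref{Ishi} absorbing the resulting Jacobian. One small point to make explicit: in the step $K_{\U}(\zeta,\zeta)^{\beta} \leq C\,K_{\U}(z,z)^{\beta}$ the constant depends on the sign of $\beta$, but since (\ref{ineq2}) is a two-sided bound this works for every real $\beta$; the paper's proof relies on the same fact.
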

\begin{proof}
By \cite[Lemma 3.5]{Yamaji}, there exists a $C>0$ such that 
\begin{align*}
\n{f(z)}^p 
  & \leq \frac{C}{\Vol{B(z,\ru)}} \int_{B(z,\ru)} \n{f(w)}^p \, dV(w) \\
  & \leq \frac{C K_{\U}(z,z)^{\beta}}{\Vol{B(z,\ru)}} \int_{B(z,\ru)} \n{f(w)}^p 
  \, dV_{\beta}(w) ,
\end{align*}
where the lat inequality follows from (\ref{ineq2}). 
By (\ref{Nweight3}), we have 
\begin{align*}
\frac{K_{\U}(z,z)^{\beta}}{\Vol{B(z,\ru)}} 
  \leq \frac{C}{\Volb{B(z,\ru)}} .
\end{align*}
Hence, we obtain (\ref{WWW}). 
\end{proof}

\subsection{Composition operator} \label{sect24}
In this section, we summarize properties of the composition operator (see also \cite[section 11]{Zhu3}, \cite{ZhuComp}). 
Let $\varphi$ be a holomorphic map from $\mathcal{U}$ to $\mathcal{U}$. 
For $f \in \mathcal{O} ( \mathcal{U})$, we define $C_{\varphi} f := f \circ \varphi$. 
Then, $C_{\varphi}$ is a linear operator on $\mathcal{O} (\mathcal{U})$. 
The operator $C_{\varphi}$ is called the composition operator induced by $\varphi$. 
It is known that $C_{\varphi}$ is always bounded on $L^p_a(\mathcal{U},dV_{\beta})$ 
for the case that $\mathcal{U}$ is the unit disk $\mathbb{D}$. 
However, for a general minimal bounded homogeneous domain $\U$, a composition operator is not 
necessarily bounded on $L^p_a(\mathcal{U},dV_{\beta})$ (for example, see \cite{ZhuComp}).

On the other hand, for any Borel set $E$ in $\mathcal{U}$, we define 
$$ \mu_{\varphi,\beta} (E) := {\rm Vol}_{\beta}\,(\varphi^{-1}(E)) . $$ 
The measure $\mu_{\varphi,\beta}$ is called the pull-back measure of $dV_{\beta}$ induced by $\varphi$. 
Then, $C_{\varphi}$ is a bounded operator on $L^p_a(\mathcal{U},dV_{\beta})$ if and only if there exists 
a constant $C>0$ such that 
\begin{align}
 \int_{\U} \n{f(w)}^p \, d\mu_{\varphi,\beta}(w) 
  \leq C  \int_{\U} \n{f(w)}^p \, dV_{\beta}(w) \label{DefC2}
\end{align}
holds for any $f \in L^p_a(\mathcal{U},dV_{\beta})$. 

Assume that $C_{\varphi}$ is a bounded operator on $L^2_a(\mathcal{U},dV_{\beta})$. Then, we have 
\begin{align}
C_{\varphi}^{\ast} f(w) 
  &= \langle C_{\varphi}^{\ast} f , K_{w}^{(\beta)} \rangle_{L^2(dV_{\beta})} 
  = \inner{f}{C_{\varphi}K_{w}^{(\beta)}}_{L^2(dV_{\beta})}  \label{Adj}
\end{align}
for any $f \in L^2_a(\mathcal{U},dV_{\beta})$. Therefore, we have
\begin{align}
C_{\varphi} C_{\varphi}^{\ast} f(w) 
  = \langle  f, C_{\varphi}K_{\varphi(w)}^{(\beta)} \rangle_{L^2(dV_{\beta})}  
  =   \int_{\U}  K_{\U}^{(\beta)} (\varphi (w), \varphi (u)) f(u) \, dV_{\beta}(u)  . \label{CptKakikae}
\end{align}
We use (\ref{CptKakikae}) to characterize the compactness of $C_{\varphi}$. 
Moreover, we have 
\begin{align*}
C_{\varphi}^{\ast} C_{\varphi} f(w) 
  &= \inner{C_{\varphi}f}{C_{\varphi}K_{w}^{(\beta)}}_{L^2(dV_{\beta})}  \\
  &= \int_{\U} f(\varphi (u)) K_{\U}^{(\beta)} (w, \varphi (u)) \, dV_{\beta}(u)  \\
  &= \int_{\U} K_{\U}^{(\beta)} (w, u) f(u) \, d\mu_{\varphi,\beta}(u)  
\end{align*}
by (\ref{Adj}). Therefore, we obtain $C_{\varphi}^{\ast} C_{\varphi} = T_{\mu_{\varphi,\beta}}$, 
where $T_{\mu_{\varphi,\beta}}$ is the Toeplitz operator with symbol $\mu_{\varphi,\beta}$. 
The boundedness of Toeplitz operators are discussed in \cite{Yamaji}, \cite[section 7]{Zhu3} and \cite{ZhuComp}.


\section{Carleson measures and vanishing Carleson measures}  \label{sect30}
\subsection{Berezin symbol and averaging function}
For a Borel measure $\mu$ on $\mathcal{U}$, we define a function $\widetilde{\mu}$ on $\mathcal{U}$ by
\begin{eqnarray*}
 \widetilde{\mu}(z) := \int_{\mathcal{U}} \vert k_z^{(\beta)}(w) \vert^2 \, d\mu(w) ,
\end{eqnarray*}
which is called the Berezin symbol of the measure $\mu$. 
For fixed $\ru>0$, we define a function $\widehat{\mu}$ on $\mathcal{U}$ by
\begin{eqnarray*}
\widehat{\mu}(z) := \frac{\mu(B(z,\ru))}{\Volb{B(z,\ru)}} ,
\end{eqnarray*}
which is called the averaging function of the Borel measure $\mu$. 
Although the value of $\widehat{\mu}$ depends on the parameter $\ru$, we will ignore that distinction. 
\begin{Lem} \label{CarlesonLemma}
There exists a positive constant $C$ such that 
\begin{align*}
 \int_{\U} \n{f(z)}^p \, d \mu(z) 
  \leq C \int_{\U} \widehat{\mu}(z) \n{f(z)}^p \, dV_{\beta}(z) 
\end{align*}
for any $p>0$ and $f \in \mathcal{O} (\mathcal{U})$. 
\end{Lem}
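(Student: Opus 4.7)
The plan is to combine the sub-mean-value estimate of Lemma \ref{W-Lemma7} with a Fubini-type argument, using the symmetry of the relation ``$w \in B(z,\rho)$ iff $z \in B(w,\rho)$'' together with the comparability of Bergman volumes from Lemma \ref{W-Vol}.

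First, I would apply Lemma \ref{W-Lemma7} pointwise: for every $z \in \mathcal{U}$,
\[
|f(z)|^p \leq \frac{C_1}{\Volb{B(z,\rho)}} \int_{B(z,\rho)} |f(w)|^p \, dV_{\beta}(w).
\]
Integrating against $d\mu(z)$ and inserting the indicator $\mathbf{1}_{B(z,\rho)}(w)$ yields
\[
\int_{\U} |f(z)|^p \, d\mu(z) \leq C_1 \int_{\U} \int_{\U} \frac{\mathbf{1}_{B(z,\rho)}(w)}{\Volb{B(z,\rho)}} |f(w)|^p \, dV_{\beta}(w) \, d\mu(z).
\]

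Next I would swap the order of integration by Fubini (Tonelli, as the integrand is nonnegative), using that $\mathbf{1}_{B(z,\rho)}(w) = \mathbf{1}_{B(w,\rho)}(z)$ since the Bergman distance is symmetric:
\[
\int_{\U} |f(z)|^p \, d\mu(z) \leq C_1 \int_{\U} |f(w)|^p \left( \int_{B(w,\rho)} \frac{d\mu(z)}{\Volb{B(z,\rho)}} \right) dV_{\beta}(w).
\]
By Lemma \ref{W-Vol}, for $z \in B(w,\rho)$ we have $\Volb{B(z,\rho)} \geq C_2^{-1} \Volb{B(w,\rho)}$, so the inner integral is bounded by
\[
\frac{C_2}{\Volb{B(w,\rho)}} \, \mu(B(w,\rho)) = C_2 \widehat{\mu}(w).
\]
Plugging this back in gives the desired inequality with $C = C_1 C_2$.

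The only real subtlety is verifying the Fubini step, which requires nonnegativity (immediate, since $|f|^p \geq 0$) and the symmetry of the metric balls; both are automatic. The rest is a routine application of the two preceding lemmas, and I do not anticipate any substantive obstacle.
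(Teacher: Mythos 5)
Your proposal is correct and follows essentially the same route as the paper: apply Lemma \ref{W-Lemma7}, rewrite as a double integral with the indicator $\chi_{B(z,\ru)}(w)=\chi_{B(w,\ru)}(z)$, swap the order of integration by Fubini--Tonelli, and control the inner integral via Lemma \ref{W-Vol}. No gaps.
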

\begin{proof}
By Lemma \ref{W-Lemma7}, we have 
\begin{align}
\int_{\U} \n{f(z)}^p \, d \mu(z) 
  \leq C \int_{\U} \left( \frac{1}{\Volb{B(z,\ru)}}  \int_{B(z,\ru)} \n{f(w)}^p \, dV_{\beta}(w) \right) d \mu(z)  \label{Carl3}
\end{align}
for any $p>0$ and $f \in \mathcal{O} (\mathcal{U})$. 
The right hand side of (\ref{Carl3}) is equal to
\begin{align}
 C \int_{\U}\int_{\U} \frac{\chi_{B(z,\ru)}(w)}{\Volb{B(z,\ru)}} \n{f(w)}^p \, dV_{\beta}(w)d \mu(z) .  \label{Carl4}
\end{align}
By using Fubini's theorem, (\ref{Carl4}) is equal to
\begin{align}
 C \int_{\U} \left( \int_{{B(w,\ru)}} \frac{1}{\Volb{B(z,\ru)}}  \, d \mu(z) \right) \n{f(w)}^p \, dV_{\beta}(w) . \label{Carl5}
\end{align}
By Lemma \ref{W-Vol}, (\ref{Carl5}) is less than or equal to 
\begin{align*}
 C \int_{\U}  \frac{\mu(B(w,\ru))}{\Volb{B(w,\ru)}}  \n{f(w)}^p \, dV_{\beta}(w)   .  
\end{align*}
\end{proof}

\subsection{Carleson measures}
Let $\mu$ be a positive Borel measure on $\mathcal{U}$ and $p >0$. 
We say that $\mu$ is a Carleson measure for $L^p_a(\mathcal{U},dV_{\beta})$ if 
there exists a constant $M>0$ such that 
\begin{eqnarray*}
 \int_{\mathcal{U}} \vert f(z) \vert^p \, d\mu (z) \leq M \int_{\mathcal{U}} \vert f(z) \vert^p \, dV_{\beta} (z) 
\end{eqnarray*}
for all $f \in L^p_a(\mathcal{U},dV_{\beta})$. 
It is easy to see that $\mu$ is a Carleson measure for $L^p_a(\mathcal{U},dV_{\beta})$ 
if and only if $L^p_a(\mathcal{U},dV_{\beta}) \subset L^p_a(\mathcal{U}, d\mu)$ and the inclusion map 
$$  i_p : L^p_a(\mathcal{U},dV_{\beta}) \longrightarrow  L^p_a(\mathcal{U}, d\mu)$$
is bounded. 

The following theorem is a generalization of \cite[Theorem 7]{Zhu1} to a minimal bounded homogeneous domain. 
\begin{Thm} \label{W-Carlprop} 
Let $\mu$ be a positive Borel measure on $\mathcal{U}$. 
Then, the following conditions are all equivalent. \\
  (i) \ $\mu$ is a Carleson measure for $L^p_a(\mathcal{U},dV_{\beta})$. \\
 (ii) \ $\widetilde{\mu}$ is a bounded function on $\U$. \\
(iii) \ $\widehat{\mu}$ is a bounded function on $\U$. 
\end{Thm}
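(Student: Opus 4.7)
My plan is to close the cycle (iii) $\Rightarrow$ (i) $\Rightarrow$ (iii) together with (ii) $\Leftrightarrow$ (iii). The two easy implications should come essentially for free from Lemmas \ref{CarlesonLemma} and \ref{W-Lemma1}; the real work is concentrated in (i) $\Rightarrow$ (iii), which I expect to be the main obstacle.

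For the easy pieces, (iii) $\Rightarrow$ (i) follows by inserting $\widehat{\mu} \leq M$ into Lemma \ref{CarlesonLemma}, which yields $\int_{\U} |f|^p \, d\mu \leq CM \int_{\U} |f|^p \, dV_{\beta}$ for every holomorphic $f$. The equivalence (ii) $\Leftrightarrow$ (iii) is purely local: restricting the Berezin integral $\widetilde{\mu}(a)$ to the Bergman ball $B(a,\ru)$ and using the lower bound of Lemma \ref{W-Lemma1} gives $\widetilde{\mu}(a) \geq \Kr^{-1}\widehat{\mu}(a)$, hence (ii) $\Rightarrow$ (iii); conversely, applying Lemma \ref{CarlesonLemma} with exponent $2$ and test function $k_a^{(\beta)}$ (which has unit $L^2(dV_{\beta})$-norm) produces $\widetilde{\mu}(a) \leq C \|\widehat{\mu}\|_\infty$, giving (iii) $\Rightarrow$ (ii).

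The hard part is (i) $\Rightarrow$ (iii), since the Carleson hypothesis is given for only the single exponent $p>0$. My idea is to test the Carleson inequality on a holomorphic function $f_a$ whose modulus is concentrated on $B(a,\ru)$ and whose $L^p(dV_{\beta})$-norm is independent of $a$. Setting $\gamma := 2(1+\beta)/p - 1$, I would define
\begin{align*}
f_a(w) := \frac{K_{\U}(w,a)^{1+\gamma}}{K_{\U}(a,a)^{(1+\gamma)/2}},
\end{align*}
interpreted through the same fractional-power convention already used to define $k_a^{(\beta)}$ in (\ref{WBergKer}); this is legitimate because the Bergman kernel of a bounded homogeneous domain is nowhere zero. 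A direct computation yields $|f_a|^p = C_\beta^{-1}|k_a^{(\beta)}|^2$, so $\|f_a\|_{L^p(dV_{\beta})}^p = C_\beta^{-1}$ is independent of $a$; meanwhile Lemma \ref{W-Lemma1} gives $|f_a(w)|^p \geq (\Kr C_\beta)^{-1}\Volb{B(a,\ru)}^{-1}$ on $B(a,\ru)$. Restricting the Carleson bound to $B(a,\ru)$ on the left then forces $\widehat{\mu}(a) \leq \Kr M$ uniformly in $a$, closing the cycle. The only delicate point is the legitimacy of the fractional exponent $1+\gamma$, but since the same convention is already in use in (\ref{WBergKer}), no new machinery is needed.
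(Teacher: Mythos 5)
Your proposal is correct and follows essentially the same route as the paper: your test function $f_a$ is, up to a multiplicative constant, the paper's test function $k_a^{(\beta)}(\cdot)^{2/p}$, and both arguments rest on exactly Lemma \ref{W-Lemma1} and Lemma \ref{CarlesonLemma}. The only difference is organizational --- the paper runs the single cycle (i) $\Rightarrow$ (ii) $\Rightarrow$ (iii) $\Rightarrow$ (i), whereas you prove (i) $\Rightarrow$ (iii) directly and supply (iii) $\Rightarrow$ (ii) as a separate (strictly speaking redundant) implication.
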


\begin{proof}
First, we prove $(i) \Longrightarrow (ii)$. 
Since $k^{(\beta)}_z(w)^{\frac{2}{p}} \in L^p_a(\mathcal{U},dV_{\beta})$ and $\mu$ is a Carleson measure for $L^p_a(\mathcal{U},dV_{\beta})$, 
we have 
\begin{align*}
\int_{\mathcal{U}} \vert k_z^{(\beta)}(w) \vert^2 \, d\mu(w) 
\leq M \int_{\mathcal{U}}\n{k^{(\beta)}_z(w)}^2  dV_{\beta} (w) =M .
\end{align*}
Therefore, $\widetilde{\mu}$ is bounded. 
Next, we prove $(ii) \Longrightarrow (iii)$. 
Take any $w \in \mathcal{U}$. By Lemma \ref{W-Lemma1}, there exists a positive constant $C$ such that 
\begin{eqnarray}
  C \leq \n{k^{(\beta)}_z(w)}^2 \Volb{B(w,\ru)}   \label{W-Carlesonequiv2}
\end{eqnarray}
holds for any $w \in B(z,\ru)$. 
We integrate $(\ref{W-Carlesonequiv2})$ on $B(z,\ru)$ by $d\mu$. Then, we have 
\begin{align}
 \frac{\mu(B(z,\ru))}{\Volb{B(z,\ru)}}   
   \leq C \int_{B(a,\ru)}\n{k^{(\beta)}_a(z)}^2 \, d\mu(z) .   \label{W-Carlesonequiv2.3}
\end{align}
Therefore, we have 
\begin{align}
\widehat{\mu}(z) \leq C \widetilde{\mu}(z) .  \label{W-Carlesonequiv3}
\end{align}
Hence,  $(ii) \Longrightarrow (iii)$ holds. The part $(iii) \Longrightarrow (i)$ follows from Lemma \ref{CarlesonLemma}. 
\end{proof}
Similarly to \cite[Theorem 4.1]{Yamaji}, we can prove that these conditions are equivalent to the following condition: 
(iv) The Toeplitz operator $T_{\mu}$ is bounded on $L^2_a(\mathcal{U},dV_{\beta})$.

\subsection{Vanishing Carleson measure}
Suppose that $\mu$ is a Carleson measure for $L^p_a(\mathcal{U},dV_{\beta})$. 
We say that $\mu$ is a vanishing Carleson measure for $L^p_a(\mathcal{U},dV_{\beta})$ if 
\begin{align*}
\lim_{k \rightarrow \infty} \int_{\U} \n{f_k(w)}^p \, d \mu(w) =0 
\end{align*}
whenever $\{ f_k \}$ is a bounded sequence in $L^p_a(\mathcal{U}, dV_{\beta})$ that converges to $0$ 
uniformly on each compact subset of $\U$. 

The following theorem is a generalization of \cite[Theorem 11]{Zhu1} to a minimal bounded homogeneous domain. 
\begin{Thm}  \label{W-VanCarl} 
Let $\mu$ be a finite positive Borel measure on $\mathcal{U}$. 
Then, the following conditions are all equivalent. \\
  (i) \ $\mu$ is a vanishing Carleson measure for $L^p_a(\mathcal{U},dV_{\beta})$. \\
 (ii) \ $\widetilde{\mu} (z) \rightarrow 0$ as $z \rightarrow \partial \U$. \\
(iii) \ $\widehat{\mu} (z) \rightarrow 0$ as $z \rightarrow \partial \U$.
\end{Thm}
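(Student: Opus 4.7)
\medskip

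The proof will follow the three-cycle $(i) \Rightarrow (ii) \Rightarrow (iii) \Rightarrow (i)$, closely mirroring the proof of Theorem \ref{W-Carlprop} but with an extra compactness-splitting argument.

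For $(i) \Rightarrow (ii)$, I would take an arbitrary sequence $\{z_k\} \subset \U$ with $z_k \to \partial \U$ and use the family of test functions $f_k := \bigl(k_{z_k}^{(\beta)}\bigr)^{2/p}$. By the reproducing property each $f_k$ has $L^p(dV_\beta)$-norm equal to $1$, so $\{f_k\}$ is bounded in $L^p_a(\U, dV_\beta)$. To see $f_k \to 0$ uniformly on compact subsets, I would combine the explicit formula \eqref{WBergKer} with the fact (noted just after Theorem \ref{Ishi}) that $K_\U(\cdot, w)$ is bounded on $\U$ for every fixed $w$; by continuity of $K_\U$ this bound is uniform for $w$ in a compact set, while $K_\U(z_k, z_k) \to \infty$ as $z_k \to \partial \U$ by \eqref{Nweight2} together with the finite measure of $\U$. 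Then the vanishing Carleson hypothesis gives $\widetilde{\mu}(z_k) = \int_{\U} |k_{z_k}^{(\beta)}|^2 d\mu = \int_{\U} |f_k|^p d\mu \to 0$.

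For $(ii) \Rightarrow (iii)$, I would reuse exactly the inequality established in the proof of Theorem \ref{W-Carlprop}: Lemma \ref{W-Lemma1} yields $\widehat{\mu}(z) \leq C\,\widetilde{\mu}(z)$ pointwise on $\U$, after integrating $C \leq |k_z^{(\beta)}(w)|^2 \Volb{B(z,\ru)}$ over $B(z,\ru)$ against $d\mu$ and applying Lemma \ref{W-Vol} to compare $\Volb{B(z,\ru)}$ with $\Volb{B(w,\ru)}$. The implication then follows immediately.

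The main work is in $(iii) \Rightarrow (i)$. Let $\{f_k\}$ be a bounded sequence in $L^p_a(\U, dV_\beta)$ converging to $0$ uniformly on compact subsets. By Lemma \ref{CarlesonLemma},
\begin{align*}
\int_{\U} |f_k(z)|^p \, d\mu(z) \leq C \int_{\U} \widehat{\mu}(z) |f_k(z)|^p \, dV_\beta(z).
\end{align*}
Given $\varepsilon > 0$, condition (iii) lets me choose a compact $K \subset \U$ such that $\widehat{\mu}(z) < \varepsilon$ for $z \in \U \setminus K$. I would first verify that $\widehat{\mu}$ is bounded on $K$: since $\mu$ is finite and $\Volb{B(z,\ru)}$ is bounded below on $K$ by Lemma \ref{WVOL} and the fact that $K_\U(z,z)$ is continuous hence bounded on $K$, the averaging function is bounded on $K$ by some constant $M_K$. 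Splitting the integral and using the uniform convergence $f_k \to 0$ on $K$ together with the uniform $L^p$-bound $\sup_k \|f_k\|_{L^p(dV_\beta)} \leq M$, I obtain
\begin{align*}
\int_{\U} \widehat{\mu}(z) |f_k(z)|^p \, dV_\beta(z) \leq M_K \int_{K} |f_k|^p \, dV_\beta + \varepsilon M^p,
\end{align*}
and letting $k \to \infty$ then $\varepsilon \to 0$ completes the proof.

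The only potentially delicate step is the uniform convergence of $\{f_k\} = \{(k_{z_k}^{(\beta)})^{2/p}\}$ to zero on compacts in $(i) \Rightarrow (ii)$; everything else is a straightforward adaptation of the bounded case in Theorem \ref{W-Carlprop} plus a standard compactness-splitting argument.
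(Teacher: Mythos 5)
Your proof is correct and follows essentially the same route as the paper: the same test functions $(k_{z}^{(\beta)})^{2/p}$ for $(i)\Rightarrow(ii)$, the same inequality $\widehat{\mu}\leq C\widetilde{\mu}$ from the bounded case for $(ii)\Rightarrow(iii)$, and the same splitting of $\int\widehat{\mu}\,|f_k|^p\,dV_{\beta}$ over a compact set and its boundary-strip complement for $(iii)\Rightarrow(i)$. The only differences are cosmetic: you fill in the uniform convergence of the normalized kernels on compacta (which the paper delegates to Engli\v{s}'s lemmas), and you bound $\widehat{\mu}$ on the compact piece via Lemma \ref{WVOL} and the finiteness of $\mu$ rather than invoking continuity of $\widehat{\mu}$ as the paper does --- a slightly more robust justification of the same bound.
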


\begin{proof}
First, we prove $(i) \Longrightarrow (ii)$. 
In the same way as in \cite[Lemma 1]{Eng} and \cite[Lemma 5]{Eng}, 
we can see that $\{ k^{(\beta)}_z \}$ converges to $0$ uniformly on compact subsets of $\U$ as 
$z \rightarrow \partial \U$. 
Therefore, $\{ k^{(\beta)}_z(w)^{\frac{2}{p}} \}$ is a bounded sequence in $L^p_a(\mathcal{U}, dV_{\beta})$ 
that converges to $0$ uniformly on each compact subset of $\U$. 
Hence, $(ii)$ holds. The part $(ii) \Longrightarrow (iii)$ follows from (\ref{W-Carlesonequiv3}).
Finally, we prove $(iii) \Longrightarrow (i)$. 
Take any bounded sequence $\{ f_n \}$ in $L^p_a(\mathcal{U}, dV_{\beta})$ that converges to $0$ 
uniformly on each compact subset of $\U$. 
Take any $\varepsilon >0$. Then, there exists a constant $\delta>0$ such that 
\begin{align*}
\sup_{{\rm dist}(z,\partial \U)< \delta} \n{\widehat{\mu}(z)} < \varepsilon 
\end{align*}
by (iii). Let 
$\U_{\delta} := \{ z \in \mathcal{U} \mid {\rm dist}(z,\partial \U) < \delta \}$. 
Since $\U \backslash \U_{\delta}$ is a compact set, there exists an integer $N$ such that 
\begin{align*}
\sup_{z \in \U \backslash \U_{\delta}} \n{f_n(z)}^p < \varepsilon 
\end{align*}
for any $n \geq N$. Here, we have 
\begin{align}
\int_{\U} \n{f_n(z)}^p \, d\mu(z) 
   & \leq C \int_{\U} \widehat{\mu}(z)  \n{f_n(z)}^p \, dV_{\beta}(z)   \nonumber \\
   & = C \left( \int_{\U \backslash \U_{\delta}} \widehat{\mu}(z)  \n{f_n(z)}^p \, dV_{\beta}(z) 
       +  \int_{ \U_{\delta}} \widehat{\mu}(z) \n{f_n(z)}^p \, dV_{\beta}(z) \right)  \label{term}
\end{align}
by Lemma \ref{CarlesonLemma}. Since $\widehat{\mu}(z)$ is a continuous function on $\U \backslash \U_{\delta}$ 
and $\U \backslash \U_{\delta}$ is a compact set, there exists a constant $M_{\delta}>0$ such that 
\begin{align*}
\sup_{z \in \U \backslash \U_{\delta}} \widehat{\mu}(z) \leq M_{\delta} .
\end{align*}
Therefore, the first term of (\ref{term}) is less than or equal to $C M_{\delta} \varepsilon$ if $n \geq N$. 
On the other hand, since $\{ f_n \}$ is a bounded sequence in $L^p_a(\mathcal{U}, dV_{\beta})$, 
there exists a constant $M>0$ such that 
\begin{align*}
\int_{\U}  \n{f_n(z)}^p \, dV_{\beta}(z)  \leq M 
\end{align*}
for all $n \in \mathbb{N}$. Therefore, the second term of (\ref{term}) is less than or equal to $C M \varepsilon$. 
Hence, we obtain 
\begin{align*}
\int_{\U} \n{f_n(z)}^p \, d\mu(z) \leq C (M+M_{\delta}) \varepsilon
\end{align*}
for any $n \geq N$. Therefore, we obtain 
\begin{align*}
\lim_{n \rightarrow \infty} \int_{\U} \n{f_n(z)}^p \, d\mu(z) =0 .
\end{align*}
Hence,  $(iii) \Longrightarrow (i)$ holds.
\end{proof}

We can show that these conditions are also equivalent to the following condition (cf. \cite[Theorem 5.1]{Yamaji}): 
(iv) The Toeplitz operator $T_{\mu}$ is compact on $L^2_a(\mathcal{U},dV_{\beta})$.


\section{Relation between Carleson measures and composition operators} \label{sect40}
\subsection{Criterion of boundedness} \label{sect41} 
From (\ref{DefC2}), we see that $C_{\varphi}$ is a bounded operator on $L^p_a(\mathcal{U},dV_{\beta})$ if and only if 
the pull-back measure $\mu_{\varphi,\beta}$ is a Carleson measure for $L^p_a(\mathcal{U},dV_{\beta})$. 
By Theorem \ref{W-Carlprop}, the property of being a Carleson measure is independent of $p$.
Hence, the boundedness of $C_{\varphi}$ on $L^p_a(\U,dV_{\beta})$ is also independent of $p$.
We summarize the characterization of the boundedness of $C_{\varphi}$ on $L^p_{a}(\U,dV_{\beta})$ as follows. 
\begin{Lem} \label{CompBddness} 
Let $\beta > \bmin$. Then, the following conditions are all equivalent. \\
$(i)$ \ $C_{\varphi}$ is a bounded operator on $L^p_{a}(\U,dV_{\beta})$. \\
$(ii)$ \ The pull-back measure $\mu_{\varphi,\beta}$ is a Carleson measure for $L^p_a(\mathcal{U},dV_{\beta})$. \\
$(iii)$ \ $\widetilde{\mu_{\varphi,\beta}}$ is a bounded function on $\U$. \\
$(iv)$ \ $\widehat{\mu_{\varphi,\beta}}$ is a bounded function on $\U$. \\
$(v)$ \ The function 
\begin{align*}
 F_{\ru,\beta}(z)  := \int_{B(z,\ru)} \n{k_z^{(\beta)} (w)}^2 \, d\mu_{\varphi,\beta} (w)
\end{align*}
is bounded on $\mathcal{U}$. 
\end{Lem}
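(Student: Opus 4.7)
The plan is that this lemma is essentially a packaging of facts already available in the paper, so I would treat it as an assembly of known equivalences rather than a result requiring new technology. My roadmap is: first dispose of $(i) \Leftrightarrow (ii)$ by a direct change-of-variables computation; then invoke Theorem \ref{W-Carlprop} to get $(ii) \Leftrightarrow (iii) \Leftrightarrow (iv)$; and finally establish $(iv) \Leftrightarrow (v)$ via a pointwise two-sided estimate on the normalized reproducing kernel inside Bergman balls, which is provided by Lemma \ref{W-Lemma1}.

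For $(i) \Leftrightarrow (ii)$ I would write, for $f \in L^p_a(\U, dV_\beta)$,
\begin{align*}
\N{C_\varphi f}_{L^p(dV_\beta)}^p = \int_\U \n{f(\varphi(w))}^p \, dV_\beta(w) = \int_\U \n{f(z)}^p \, d\mu_{\varphi,\beta}(z),
\end{align*}
which is exactly the Carleson inequality (\ref{DefC2}) defining the Carleson property of $\mu_{\varphi,\beta}$. The equivalences $(ii) \Leftrightarrow (iii) \Leftrightarrow (iv)$ are then nothing but Theorem \ref{W-Carlprop} applied to the positive Borel measure $\mu_{\varphi,\beta}$ (and as noted in the paragraph preceding the lemma, these conditions are independent of $p$).

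The only genuinely new piece is $(iv) \Leftrightarrow (v)$. The idea is that on the small ball $B(z,\rho)$ the normalized kernel $|k_z^{(\beta)}|^2$ is essentially constant. More precisely, by Lemma \ref{W-Lemma1} there is a constant $\Kr > 0$ with
\begin{align*}
\Kr^{-1} \Volb{B(z,\rho)}^{-1} \leq \n{k_z^{(\beta)}(w)}^2 \leq \Kr \Volb{B(z,\rho)}^{-1}
\end{align*}
for all $z \in \U$ and all $w \in B(z,\rho)$. Integrating this double inequality against $d\mu_{\varphi,\beta}$ over $B(z,\rho)$ yields
\begin{align*}
\Kr^{-1} \widehat{\mu_{\varphi,\beta}}(z) \leq F_{\rho,\beta}(z) \leq \Kr \widehat{\mu_{\varphi,\beta}}(z),
\end{align*}
so that boundedness of $\widehat{\mu_{\varphi,\beta}}$ and boundedness of $F_{\rho,\beta}$ are equivalent.

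There is no real obstacle here; the only point to watch is that every ingredient is already in place. In particular, I do not need the assumption $\beta > \bmin$ for the kernel estimates themselves; it only enters to ensure that the spaces $L^p_a(\U, dV_\beta)$ are non-trivial so that the Carleson-measure framework of Theorem \ref{W-Carlprop} applies. If I wanted to streamline the writeup, I would organize the implications as the cycle $(i) \Leftrightarrow (ii) \Leftrightarrow (iii) \Leftrightarrow (iv) \Leftrightarrow (v)$, citing Theorem \ref{W-Carlprop} for the middle block and Lemma \ref{W-Lemma1} for the last equivalence.
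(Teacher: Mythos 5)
Your proposal is correct and follows essentially the same route as the paper: $(i)\Leftrightarrow(ii)$ by the push-forward identity (\ref{DefC2}), $(ii)\Leftrightarrow(iii)\Leftrightarrow(iv)$ by Theorem \ref{W-Carlprop}, and the last equivalence from the kernel estimate of Lemma \ref{W-Lemma1}. The only cosmetic difference is that you integrate both sides of Lemma \ref{W-Lemma1} to get the two-sided comparison $\Kr^{-1}\widehat{\mu_{\varphi,\beta}}\leq F_{\rho,\beta}\leq \Kr\,\widehat{\mu_{\varphi,\beta}}$, whereas the paper closes the cycle via the trivial implication $(iii)\Rightarrow(v)$ together with $(v)\Rightarrow(iv)$ from (\ref{W-Carlesonequiv2.3}), which is the same lower-bound integration.
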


\begin{proof}
The equivaliance of $(i)$ -- $(iv)$ follows from Theorem \ref{W-Carlprop}. 
Moreover, $(iii) \Longrightarrow (v)$ is trivial and $(v) \Longrightarrow (iv)$ follows from (\ref{W-Carlesonequiv2.3}). 
\end{proof}

If $C_{\varphi}$ is bounded, we have the following estimate. 
\begin{Lem} \label{Kvarphi} 
Assume that $C_{\varphi}$ is bounded on $L^p_a(\mathcal{U},dV_{\beta})$ for some $p>0$ and $\beta >\beta_0$. 
Then there exists a positive constant $C$ such that 
$$ K_{\mathcal{U}} \left( \varphi(z), \varphi(z) \right)   \leq  C {K_{\mathcal{U}}(z,z)}  $$
for any $z \in \mathcal{U}$. 
\end{Lem}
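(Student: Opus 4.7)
My plan is to extract a pointwise bound on $K_{\U}(\varphi(z),\varphi(z))$ from the boundedness of the Berezin symbol of the pull-back measure. By Lemma \ref{CompBddness}, the assumption that $C_{\varphi}$ is bounded on $L^p_a(\U,dV_{\beta})$ is equivalent to $\widetilde{\mu_{\varphi,\beta}}$ being a bounded function on $\U$, so I may take $M>0$ with $\widetilde{\mu_{\varphi,\beta}}(w) \leq M$ for every $w \in \U$.

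First I would use the definition of the pull-back measure to change variables, obtaining
\[
\widetilde{\mu_{\varphi,\beta}}(w) = \int_{\U} \n{k_w^{(\beta)}(v)}^2 \, d\mu_{\varphi,\beta}(v) = \int_{\U} \n{k_w^{(\beta)}(\varphi(u))}^2 \, dV_{\beta}(u).
\]
The crucial observation is that $u \mapsto k_{\varphi(z)}^{(\beta)}(\varphi(u))$ is holomorphic on $\U$, since $k_{\varphi(z)}^{(\beta)}$ is holomorphic and $\varphi$ is a holomorphic self-map of $\U$. Applying the sub-mean-value estimate (Lemma \ref{W-Lemma7}) with $p=2$ at the point $z$ over the Bergman ball $B(z,\ru)$, and then enlarging the domain of integration to all of $\U$, I would obtain
\[
\n{k_{\varphi(z)}^{(\beta)}(\varphi(z))}^2 \leq \frac{C}{\Volb{B(z,\ru)}} \int_{B(z,\ru)} \n{k_{\varphi(z)}^{(\beta)}(\varphi(u))}^2 \, dV_{\beta}(u) \leq \frac{C\,M}{\Volb{B(z,\ru)}}.
\]

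Now the left-hand side equals $K_{\U}^{(\beta)}(\varphi(z),\varphi(z)) = C_{\beta}\, K_{\U}(\varphi(z),\varphi(z))^{1+\beta}$ by (\ref{WBergKer}), while Lemma \ref{WVOL} provides $\Volb{B(z,\ru)}^{-1} \leq C\, K_{\U}(z,z)^{1+\beta}$. Combining these and absorbing constants yields $K_{\U}(\varphi(z),\varphi(z))^{1+\beta} \leq C\, K_{\U}(z,z)^{1+\beta}$, and since $1+\beta>0$ for every admissible weight, taking $(1+\beta)$-th roots gives the conclusion.

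No step requires real ingenuity; the only minor obstacle is recognizing the correct holomorphic test function, namely $k_{\varphi(z)}^{(\beta)} \circ \varphi$. The point is that its $L^2_{\beta}$-norm squared is precisely $\widetilde{\mu_{\varphi,\beta}}(\varphi(z))$ via the change-of-variables formula for the pull-back measure, so the submean inequality applied at $z$ (rather than at $\varphi(z)$) converts the global integral bound into the desired pointwise comparison between the Bergman kernels at $\varphi(z)$ and at $z$.
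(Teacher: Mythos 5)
Your argument is correct, and it takes a genuinely different route from the paper's. The paper reduces to $p=2$ via Lemma \ref{CompBddness} and then computes the adjoint on the normalized reproducing kernel: from (\ref{Adj}) one gets $C_{\varphi}^{\ast} k_z^{(\beta)}(w) = K_{\U}^{(\beta)}(w,\varphi(z))/K_{\U}^{(\beta)}(z,z)^{1/2}$, whence $\N{C_{\varphi}^{\ast} k_{z}^{(\beta)}}^2_{L^2(dV_{\beta})} = \bigl(K_{\U}(\varphi(z),\varphi(z))/K_{\U}(z,z)\bigr)^{1+\beta}$, which is bounded by $\N{C_{\varphi}^{\ast}}^2$ since $\N{k_z^{(\beta)}}=1$; this is an exact identity and needs nothing beyond the Hilbert-space boundedness. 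You instead extract the bound from the Carleson-measure side: boundedness of $\widetilde{\mu_{\varphi,\beta}}$ (Lemma \ref{CompBddness}), the change of variables $\int_{\U} g\,d\mu_{\varphi,\beta} = \int_{\U} g\circ\varphi\,dV_{\beta}$ applied to $g = \vert k_{\varphi(z)}^{(\beta)}\vert^2$, the sub-mean-value estimate of Lemma \ref{W-Lemma7} for the holomorphic function $k_{\varphi(z)}^{(\beta)}\circ\varphi$ at the point $z$, and the volume estimate of Lemma \ref{WVOL}. All of these steps are available at this point in the paper and each is applied correctly, so the proof goes through; the trade-off is that your route invokes more machinery (and picks up extra multiplicative constants from Lemmas \ref{W-Lemma7} and \ref{WVOL}) where the paper gets an identity, but it has the mild advantage of using only the real-variable Carleson characterization rather than the adjoint formula. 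One small remark common to both proofs: passing from $K_{\U}(\varphi(z),\varphi(z))^{1+\beta} \leq C K_{\U}(z,z)^{1+\beta}$ to the stated inequality requires $1+\beta>0$, i.e. $\bmin \geq -1$; this holds for the domains in question and is used implicitly by the paper as well (e.g. in Lemma \ref{WVOL}), so it is not a gap relative to the paper's own standard, but it is worth being aware that you are taking a $(1+\beta)$-th root.
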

\begin{proof}
By Lemma \ref{CompBddness}, it is enough to consider $p=2$. 
By (\ref{Adj}), we have 
\begin{align*}
C_{\varphi}^{\ast} k_z^{(\beta)}(w) 
  &= \inner{k_z^{(\beta)}}{C_{\varphi}K_{w}^{(\beta)}}_{L^2(dV_{\beta})}  
  = K_{\U}^{(\beta)}(z,z)^{-\frac{1}{2}} \, \overline{\langle C_{\varphi}K_{w}^{(\beta)} , K_z^{(\beta)}\rangle}_{L^2(dV_{\beta})}  \\
  &= K_{\U}^{(\beta)}(z,z)^{-\frac{1}{2}} \, \overline{C_{\varphi}K_{w}^{(\beta)}(z)}  
  = \frac{K_{\U}^{(\beta)}(w,\varphi(z))}{K_{\U}^{(\beta)}(z,z)^{\frac{1}{2}}}  
\end{align*}
Therefore, we have 
\begin{align}
 \N{C_{\varphi}^{\ast} k_{z}^{(\beta)}}^2_{L^2(dV_{\beta})} 
    =  \frac{K_{\U}^{(\beta)} \left( \varphi(z), \varphi(z) \right) }{K_{\U}^{(\beta)}(z,z)} 
    =  \left( \frac{K_{\mathcal{U}}(\varphi(z), \varphi(z))}{K_{\mathcal{U}}(z,z)} \right)^{1+\beta} . \label{KBDD}
\end{align}
Since $C_{\varphi}$ is a bounded operator on $L^2_a(\mathcal{U},dV_{\beta})$ and $\Vert k_{z}^{(\beta)} \Vert_{L^2(dV_{\beta})} =1$, 
the left hand side of (\ref{KBDD}) is less than or equal to a positive constant $C$. 
\end{proof}

\begin{Thm} \label{BdCp}
If $C_{\varphi}$ is a bounded operator on $L^q_{a}(\U,dV_{\sita})$ for some $q>0$ and $\sita> \bmin$, 
then $C_{\varphi}$ is a bounded operator on $L^p_{a}(\U,dV_{\naka})$ for any $p>0$ and $\naka \geq \sita$. 
\end{Thm}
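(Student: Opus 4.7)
The plan is to invoke Lemma \ref{CompBddness} to reduce the claim to a pointwise bound on the averaging function $\widehat{\mu_{\varphi,\naka}}$. That lemma shows that the boundedness of $C_\varphi$ on $L^p_a(\U,dV_\naka)$ does not depend on $p$ and is equivalent to $\widehat{\mu_{\varphi,\naka}}\in L^\infty(\U)$, so the case $\naka=\sita$ is immediate from the hypothesis and I may assume $\naka>\sita$.

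For the main case, I would compare $\widehat{\mu_{\varphi,\naka}}$ with $\widehat{\mu_{\varphi,\sita}}$ pointwise. Using the identity $dV_\naka(w)=K_\U(w,w)^{\sita-\naka}\,dV_\sita(w)$, the numerator $\mu_{\varphi,\naka}(B(z,\ru))=\int_{\varphi^{-1}(B(z,\ru))}dV_\naka(w)$ can be rewritten, and the task reduces to bounding $K_\U(w,w)^{\sita-\naka}$ uniformly on $\varphi^{-1}(B(z,\ru))$ by a constant multiple of $K_\U(z,z)^{\sita-\naka}$. On that preimage $\varphi(w)\in B(z,\ru)$, so (\ref{ineq2}) applied to the pair $(\varphi(w),z)$ gives $K_\U(\varphi(w),\varphi(w))\asymp K_\U(z,z)$, while Lemma \ref{Kvarphi} --- available because $C_\varphi$ is bounded on $L^q_a(\U,dV_\sita)$ by hypothesis --- provides $K_\U(\varphi(w),\varphi(w))\leq C\,K_\U(w,w)$. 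Chaining these yields $K_\U(w,w)\gtrsim K_\U(z,z)$, and since $\sita-\naka\leq 0$ this upgrades to exactly the desired upper bound on $K_\U(w,w)^{\sita-\naka}$.

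Integrating, I get $\mu_{\varphi,\naka}(B(z,\ru))\leq C\,K_\U(z,z)^{\sita-\naka}\,\mu_{\varphi,\sita}(B(z,\ru))$. Combined with Lemma \ref{WVOL}, which gives $\Volb{B(z,\ru)}\asymp K_\U(z,z)^{\sita-\naka}\,{\rm Vol}_{\sita}(B(z,\ru))$, the powers of $K_\U(z,z)$ cancel upon dividing, and I obtain the clean pointwise estimate $\widehat{\mu_{\varphi,\naka}}(z)\leq C'\,\widehat{\mu_{\varphi,\sita}}(z)$ for every $z\in\U$. Since the right-hand side is bounded by the hypothesis together with Lemma \ref{CompBddness}, $\widehat{\mu_{\varphi,\naka}}$ is bounded, and the implication $(iv)\Rightarrow(i)$ in Lemma \ref{CompBddness} finishes the argument. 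The only delicate step is the pointwise reverse comparison $K_\U(w,w)\gtrsim K_\U(z,z)$ on $\varphi^{-1}(B(z,\ru))$; everything else is bookkeeping, but this step is exactly what Lemma \ref{Kvarphi} was proved to supply, so the proof should go through without surprises.
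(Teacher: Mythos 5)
Your proof is correct, and it takes a route that is parallel to but genuinely distinct from the paper's. The paper also funnels everything through Lemma \ref{CompBddness}, but it compares different members of the list of equivalent conditions: it proves the integral inequality $\Fbel{\sita}(z)\geq C\,\Fru{\naka}(z)$ (its (\ref{bdd.goal})), by writing the Berezin symbol of $\mu_{\varphi,\sita}$ as $K_{\U}(z,z)^{-(1+\sita)}\int_{\U}\n{K_{\U}(z,\varphi(w))}^{2(1+\sita)}\,dV_{\sita}(w)$, using Lemma \ref{Kvarphi} to minorize $dV_{\sita}$ by $K_{\U}(\varphi(w),\varphi(w))^{\naka-\sita}dV_{\naka}$, pulling back, restricting the integral to $B(z,\ru)$, and invoking Theorem \ref{Ishi} to control the factor $\bigl(K_{\U}(w,w)K_{\U}(z,z)/\n{K_{\U}(z,w)}^{2}\bigr)^{\naka-\sita}$. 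You instead compare the two averaging functions directly, proving $\widehat{\mu_{\varphi,\naka}}(z)\leq C\,\widehat{\mu_{\varphi,\sita}}(z)$ by a purely pointwise estimate of the density $K_{\U}(w,w)^{\sita-\naka}$ on $\varphi^{-1}(B(z,\ru))$; the key chain $K_{\U}(w,w)\gtrsim K_{\U}(\varphi(w),\varphi(w))\gtrsim K_{\U}(z,z)$ (Lemma \ref{Kvarphi} plus (\ref{ineq2}) applied to the pair $(\varphi(z'),z)$ with $\varphi(z')\in B(z,\ru)$ — careful, I mean the pair $(\varphi(w),z)$) is valid, and Lemma \ref{WVOL} handles the denominators. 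The two arguments use exactly the same inputs (Lemma \ref{Kvarphi} and the near-diagonal kernel estimates from Theorem \ref{Ishi}); what yours buys is a more elementary, integration-free measure comparison and a cleaner intermediate statement, $\widehat{\mu_{\varphi,\naka}}\lesssim\widehat{\mu_{\varphi,\sita}}$, which via Lemma \ref{Compcptness} would also yield Theorem \ref{CptCp} immediately, just as the paper reuses (\ref{bdd.goal}) there.
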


\begin{proof}
The boundedness of $C_{\varphi}$ on $L^q_{a}(\U,dV_{\sita})$ (resp. $L^p_{a}(\U,dV_{\ue})$) is equivalent to 
the boundedness of $\Fbel{\sita}$ and $\Fru{\sita}$ (resp. $\Fbel{\ue}$ and $\Fru{\ue}$) by Lemma $\ref{CompBddness}$. 
Therefore, it is sufficient to prove 
\begin{eqnarray}
\Fbel{\sita}(z) \geq C \Fru{\ue}(z) . \label{bdd.goal}
\end{eqnarray}
Since
\begin{eqnarray*}
K_{\mathcal{U}} (\varphi(w),\varphi(w)) \leq C K_{\mathcal{U}} (w,w)  
\end{eqnarray*}
by Lemma $\ref{Kvarphi}$, we have 
\begin{align*}
dV_{\sita}(w) 
     &= K_{\mathcal{U}} (w,w)^{\ue - \sita} \, dV_{\ue}(w) \\
    &\geq C K_{\mathcal{U}} (\varphi(w),\varphi(w))^{\ue - \sita} \, dV_{\ue}(w) .
\end{align*}
Hence, we obtain 
\begin{align}
{} & \Fbel{\sita}(z)   \nonumber \\
  &=     K_{\mathcal{U}} (z,z)^{-(1+ \sita)} \int_{\U} \n{K_{\U} (z,\varphi(w))}^{2(1+\sita)} \, dV_{\sita}(w)  \nonumber  \\
  &\geq  C K_{\mathcal{U}} (z,z)^{-(1+ \sita)} \int_{\U} \n{K_{\U} (z,\varphi(w))}^{2(1+\sita)}
             K_{\mathcal{U}} (\varphi(w),\varphi(w))^{\ue - \sita} \, dV_{\ue}(w) . \label{Bdd.1}
\end{align}
By the definition of the pull-back measure, the right hand side of (\ref{Bdd.1}) is equal to 
\begin{align}
 {} &     C K_{\mathcal{U}} (z,z)^{-(1+ \ue)} \int_{\U} \n{K_{\U} (z,w)}^{2(1+\ue)} 
   \left( \frac{K_{\U}(w,w) K_{\U}(z,z) }{\n{K_{\U}(z,w)}^2} \right)^{\ue - \sita} \, dV_{\ue}(w)   \nonumber  \\
  &\geq  C K_{\mathcal{U}} (z,z)^{-(1+ \ue)} \int_{B(z,\ru)} \n{K_{\U} (z,w)}^{2(1+\ue)} 
   \left( \frac{K_{\U}(w,w) K_{\U}(z,z) }{\n{K_{\U}(z,w)}^2} \right)^{\ue - \sita} \, dV_{\ue}(w)  .  \label{Bdd.2}
\end{align}
Since $w \in B(z,\ru)$, we have 
\begin{align*}
\frac{K_{\U}(w,w) K_{\U}(z,z) }{\n{K_{\U}(z,w)}^2}
  = \n{\frac{K_{\U}(w,w)}{K_{\U}(z,w)} \frac{K_{\U}(z,z) }{K_{\U}(z,w)}} \geq C_{\ru}^{-2} 
\end{align*}
by Theorem \ref{Ishi}. Therefore, (\ref{Bdd.2}) is greater than or equal to 
\begin{align*}
   C K_{\mathcal{U}} (z,z)^{-(1+ \ue)} \int_{B(z,\ru)} \n{K_{\U} (z,w)}^{2(1+\ue)} \, dV_{\ue}(w) = C \Fru{\ue}(z) .
\end{align*}
Hence, (\ref{bdd.goal}) holds. 
\end{proof}


\subsection{Criterion of compactness} \label{sect41-5} 
Let $p>0$. We say that $C_{\varphi}$ is compact on $L^p_a(\mathcal{U},dV_{\beta})$ if 
the image under $C_{\varphi}$ of any subset of $L^p_a(\mathcal{U},dV_{\beta})$ is a relatively compact subset. 
We see that $C_{\varphi}$ is compact on $L^p_a(\mathcal{U},dV_{\beta})$ if and only if
\begin{align}
\lim_{k \rightarrow \infty} \int_{\U} \n{C_{\varphi}f_k(w)}^p \, dV_{\beta}(w) =0 \label{defcompcpt}
\end{align}
holds whenever $\{ f_k \}$ is a bounded sequence in $L^p_a(\mathcal{U}, dV_{\beta})$ that converges to $0$ 
uniformly on each compact subset of $\U$ 
(for the case that $\U =\mathbb{D}$, see \cite[Proposition 3.1]{C-M}). 
Since (\ref{defcompcpt}) is equivalent to 
\begin{align*}
\lim_{k \rightarrow \infty} \int_{\U} \n{f_k(w)}^p \, d\mu_{\varphi,\beta}(w) =0 ,
\end{align*}
$C_{\varphi}$ is a compact operator on $L^p_a(\mathcal{U},dV_{\beta})$ if and only if 
$\mu_{\varphi,\beta}$ is a vanishing Carleson measure for $L^p_a(\mathcal{U}, dV_{\beta})$. 
By Theorem \ref{W-VanCarl}, the property of being a vanishing Carleson measure is independent of $p$. 
Hence, the compactness of $C_{\varphi}$ on $L^p_a(\U,dV_{\beta})$ is also independent of $p$. 
We note the characterlization of the compactness of $C_{\varphi}$ on $L^p_{a}(\U,dV_{\beta})$. 
%
\begin{Lem} \label{Compcptness} 
Let $\beta > \bmin$. Then, the following conditions are all equivalent. \\
$(i)$ \ $C_{\varphi}$ is a compact operator on $L^p_{a}(\U, dV_{\beta})$. \\
$(ii)$ \ $\mu_{\varphi,\beta}$ is a vanishing Carleson measure for $L^p_a(\U, dV_{\beta})$. \\
$(iii)$ \ $\displaystyle{\lim_{z \rightarrow \partial \U} \widetilde{\mu_{\varphi,\beta}}(z) = 0}$.\\
$(iv)$ \ $\displaystyle{\lim_{z \rightarrow \partial \U} \widehat{\mu_{\varphi,\beta}}(z) = 0}$.\\
$(v)$ \  $\displaystyle{\lim_{z \rightarrow \partial \U} F_{\ru,\beta}(z) = 0 }$. 
\end{Lem}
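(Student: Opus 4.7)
The plan is to mimic the proof of Lemma \ref{CompBddness} step by step, replacing Theorem \ref{W-Carlprop} by its vanishing analogue Theorem \ref{W-VanCarl}. All the deep analytic content has already been packaged in those two theorems, so the proof is essentially bookkeeping: of the five equivalences to verify, four reduce directly to Theorem \ref{W-VanCarl} and only the one involving (v) requires a short independent argument. The equivalence (i) $\iff$ (ii) has already been observed in the paragraph preceding the lemma: the compactness criterion (\ref{defcompcpt}), combined with the pushforward identity $\int_{\U} \n{f \circ \varphi}^p \, dV_{\beta} = \int_{\U} \n{f}^p \, d\mu_{\varphi,\beta}$, is precisely the defining condition for $\mu_{\varphi,\beta}$ to be a vanishing Carleson measure for $L^p_a(\U, dV_{\beta})$. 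The equivalences (ii) $\iff$ (iii) $\iff$ (iv) then follow at once by applying Theorem \ref{W-VanCarl} to $\mu = \mu_{\varphi,\beta}$; this requires only that $\mu_{\varphi,\beta}$ be finite, which holds since $\mu_{\varphi,\beta}(\U) \leq \Volb{\U} < \infty$ under the standing hypothesis $\beta > \bmin$.

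Only condition (v) calls for separate reasoning. For (iii) $\Rightarrow$ (v), the pointwise inequality
\begin{equation*}
\Fru{\beta}(z) = \int_{B(z,\ru)} \n{k_z^{(\beta)}(w)}^2 \, d\mu_{\varphi,\beta}(w) \leq \Fbel{\beta}(z)
\end{equation*}
is immediate because $B(z,\ru) \subset \U$. For the reverse (v) $\Rightarrow$ (iv), I would recycle the inequality (\ref{W-Carlesonequiv2.3}) already established in the proof of Theorem \ref{W-Carlprop}, which specialized to $\mu = \mu_{\varphi,\beta}$ yields $\widehat{\mu_{\varphi,\beta}}(z) \leq C \, \Fru{\beta}(z)$; hence $\Fru{\beta}(z) \to 0$ at the boundary forces $\widehat{\mu_{\varphi,\beta}}(z) \to 0$.

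I anticipate no real obstacle: every implication is either read off directly from Theorem \ref{W-VanCarl} or is a pointwise specialization of an inequality already proved earlier. The single point of mild care is the finiteness of $\mu_{\varphi,\beta}$ required to apply Theorem \ref{W-VanCarl}, but this reduces to $\Volb{\U} < \infty$, which is ensured by the non-triviality hypothesis $\beta > \bmin$.
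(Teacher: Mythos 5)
Your proposal is correct and matches the paper's (implicit) argument: the paper states this lemma without proof, but the preceding paragraph supplies (i) $\Leftrightarrow$ (ii), Theorem \ref{W-VanCarl} supplies (ii) $\Leftrightarrow$ (iii) $\Leftrightarrow$ (iv), and the two inequalities you cite for (v) — the trivial pointwise bound $F_{\rho,\beta} \leq \widetilde{\mu_{\varphi,\beta}}$ and (\ref{W-Carlesonequiv2.3}) — are exactly the ones used for the analogous part of Lemma \ref{CompBddness}. Your remark on the finiteness of $\mu_{\varphi,\beta}$ needed to invoke Theorem \ref{W-VanCarl} is a worthwhile detail the paper glosses over.
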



\begin{Thm} \label{CptCp}
If $C_{\varphi}$ is a compact operator on $L^q_{a}(\U, dV_{\sita})$ for some $q>0$ and $\sita> \bmin$, 
then $C_{\varphi}$ is a compact operator on $L^p_{a}(\U, dV_{\naka})$ for any $p>0$ and $\ue \geq \sita$. 
\end{Thm}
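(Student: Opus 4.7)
The plan is to mirror the strategy of Theorem \ref{BdCp}, exploiting the fact that the estimate proven there is a pointwise inequality and hence transports vanishing at the boundary, not merely boundedness. By Lemma \ref{Compcptness}, the compactness of $C_\varphi$ on $L^p_a(\U, dV_\ue)$ is equivalent to
\begin{align*}
\lim_{z \to \partial \U} F_{\ru,\ue}(z) = 0,
\end{align*}
so this boundary-vanishing statement is what I need to establish.

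First, since compactness implies boundedness, the hypothesis ensures that $C_\varphi$ is bounded on $L^q_a(\U, dV_\sita)$, and hence by Theorem \ref{BdCp} also bounded on $L^p_a(\U, dV_\ue)$ for every $p>0$ and $\ue \geq \sita$. In particular, Lemma \ref{Kvarphi} is available at weight $\sita$. Second, the assumed compactness on $L^q_a(\U, dV_\sita)$ combined with the equivalence (i) $\Leftrightarrow$ (iii) of Lemma \ref{Compcptness} gives
\begin{align*}
\lim_{z \to \partial \U} \Fbel{\sita}(z) = 0.
\end{align*}

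The key observation is that the chain of estimates in the proof of Theorem \ref{BdCp}, obtained by writing $dV_\sita(w) = K_\U(w,w)^{\ue - \sita} dV_\ue(w)$, invoking Lemma \ref{Kvarphi} to replace $K_\U(w,w)^{\ue - \sita}$ by $K_\U(\varphi(w),\varphi(w))^{\ue - \sita}$, passing to the pull-back measure, restricting the resulting integral to $B(z,\ru)$, and applying Theorem \ref{Ishi} to bound the kernel ratio uniformly on $B(z,\ru)$, is in fact a pointwise estimate
\begin{align*}
\Fbel{\sita}(z) \geq C \, F_{\ru,\ue}(z), \qquad z \in \U,
\end{align*}
with $C>0$ independent of $z$. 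Combining this with the previous display immediately forces $F_{\ru,\ue}(z) \to 0$ as $z \to \partial \U$, which completes the proof via Lemma \ref{Compcptness}.

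I do not anticipate a substantive obstacle: the argument of Theorem \ref{BdCp} was already set up pointwise, and Lemma \ref{Compcptness} provides exactly the compactness counterpart of Lemma \ref{CompBddness} needed to convert a boundary-vanishing estimate into compactness. The only point demanding care is to confirm that the constant $C$ in the pointwise inequality is genuinely independent of $z$, which follows from the fact that both the uniform bound in Lemma \ref{Kvarphi} and the uniform bound on the kernel ratio over Bergman balls of fixed radius supplied by Theorem \ref{Ishi} are global over $\U$.
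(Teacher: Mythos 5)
Your proof is correct and follows essentially the same route as the paper: the paper likewise reduces, via Lemma \ref{Compcptness}, to showing that $\lim_{z\to\partial\U}\Fbel{\sita}(z)=0$ implies $\lim_{z\to\partial\U}\Fru{\ue}(z)=0$, and deduces this from the pointwise inequality (\ref{bdd.goal}) established in the proof of Theorem \ref{BdCp}. Your additional remark that compactness supplies the boundedness needed for Lemma \ref{Kvarphi} is a correct (and implicitly assumed) detail.
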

\begin{proof}
By Lemma \ref{Compcptness}, it is enough to prove 
\begin{align*}
\lim_{z \rightarrow \partial \U} \widetilde{\mu_{\varphi,\sita}}(z) = 0
 \Longrightarrow \lim_{z \rightarrow \partial \U} F_{\ru,\ue}(z) = 0 .
\end{align*}
This follows from (\ref{bdd.goal}). 
\end{proof}

\section{Some equalities} \label{sect4.00}
\subsection{Equality for a homogeneous Siegel domain} \label{sect22}
In order to characterlize the compactness of the composition operators on $L^p_a(\U,dV_{\beta})$, 
we use an integral formula on a homogeneous Siegel domain. 
First, we recall notation and properties of the homogeneous Siegel domains following \cite{B-K} and \cite{Gin}. 
Let $\Omega \subset \R^n$ be a convex cone not containing any straight lines and 
$F: \C^m \times \C^m \longrightarrow \C^n$ a Hermitian form such that 
$F(u,u) \in {\rm Cl}(\Omega) \backslash \{ 0 \} $, where ${\rm Cl}(\Omega)$ is the closure of $\Omega$. 
Then, the Siegel domain $\mathcal{D}$ is defined by 
\begin{align*}
\Siegel = \left\{ (\xi,\eta) \in \C^n \times \C^m \, \bigg| \, \frac{\xi- \overline{\xi}}{2i} -F(\eta,\eta) \in \Omega \right\} .
\end{align*}
It is known that every bounded homogeneous domain is holomorphically equivalent to a homogeneous Siegel domain \cite{VGS}. 

Let $l$ be the rank of $\Omega$. 
For $1 \leq j \leq l$, let $n_j \geq 0, q_j\geq 0$ and $d_j \leq0$ be real numbers defined in \cite{Gin} 
(These notations are also used in \cite{B-K}. Note that $d_j$ in \cite{I-Y} is $-d_j$ in the present notation). 
We write $\UDL{n}$ by the vector of $\R^l$ whose components are $n_j$. 
The notations $\UDL{q}$ and $\UDL{d}$ are used similarly. 
By using compound power functions defined in \cite[(2.3)]{Gin}, it is known that the Bergman kernel of $\Siegel$ is given by
\begin{eqnarray*}
K_{\Siegel}(\zeta,\zeta^{\prime})  = C \, \left( \frac{\xi-\overline{\xi^{\prime}}}{2i} - F(\eta,\eta^{\prime}) \right)^{2\UDL{d} -\UDL{q}} 
\quad (\zeta = (\xi,\eta), \, \zeta^{\prime} = (\xi^{\prime},\eta^{\prime}) )
\end{eqnarray*}
(see also \cite[Proposition II.1]{B-K}). 
For
\begin{eqnarray}
\beta > \bmin := - \min \left\{ \frac{n_j+2}{2(-2d_j+q_j)} \hspace{1mm} \bigg| \hspace{1mm} 1 \leq j \leq l \right\} ,  \label{WeightCond.}
\end{eqnarray}
we consider the weighted Bergman space 
$$ L^p_a(\Siegel,K_{\Siegel}(\zeta,\zeta)^{-\beta} dV(\zeta)) := L^p(\Siegel,K_{\Siegel}(\zeta,\zeta)^{-\beta} dV(\zeta)) \cap \mathcal{O}(\Siegel).  $$
By \cite[Theorem II.2]{B-K}, we see that $L^2_a(\Siegel,K_{\Siegel}(\zeta,\zeta)^{-\beta} dV(\zeta)) \neq \{  0 \}$ 
if (\ref{WeightCond.}) holds. 
For 
\begin{align*}
 \bint := \max \left\{ \frac{n_j}{2(-2d_j+q_j)} \hspace{1mm} \bigg| \hspace{1mm} 1 \leq j \leq l \right\} , 
\end{align*}
B{\'e}koll{\'e} and Kagou showed the following integral formula. 
\begin{Lem}[\mbox{\cite[Corollary II.4]{B-K}}]  \label{Res.IYY}
Let $\beta>\bmin$ and $\alpha > \beta + \bint$. Then, one has 
\begin{eqnarray}
 \int_{\mathcal{D}} \n{K_{\mathcal{D}} (\zeta,\zeta^{\prime})}^{1+\alpha}  K_{\mathcal{D}} (\zeta^{\prime},\zeta^{\prime})^{-\beta} \, dV(\zeta^{\prime}) 
 = C_{\mathcal{D}}(\alpha,\beta) K_{\mathcal{D}} (\zeta,\zeta)^{\alpha-\beta}  , \label{IYYint}
\end{eqnarray}
where $C_{\mathcal{D}}(\alpha,\beta)$ is a positive function of $\alpha$ and $\beta$. 
\end{Lem}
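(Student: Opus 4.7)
The plan is to exploit the explicit description of $K_{\Siegel}(\zeta,\zeta')$ as a compound power function on the cone $\Omega$, reduce the integral by Fubini in the natural Siegel coordinates, and evaluate each slice by a Gindikin $\Gamma_\Omega$-identity. This is the Siegel-domain analogue of the Forelli--Rudin calculation on the unit ball, with Gindikin's multi-parameter gamma function playing the role of the beta function. Concretely, I would parametrize $\zeta' = (\xi',\eta') \in \Siegel$ by the triple $(u', \eta', t')$ with $u' = \mathrm{Re}\,\xi' \in \R^n$, $\eta' \in \C^m$, and $t' = \mathrm{Im}\,\xi' - F(\eta',\eta') \in \Omega$; in these coordinates $dV(\zeta') = du' \, dV_{\C^m}(\eta')\, dt'$, and $K_{\Siegel}(\zeta',\zeta')$ becomes a pure compound power of $t'$ with multi-exponent $2\UDL{d}-\UDL{q}$, while the kernel argument $\frac{\xi-\overline{\xi'}}{2i} - F(\eta,\eta')$ becomes an affine function of $u'$ valued in the tube $\R^n + i\Omega$.

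Next, I would integrate in three stages. Stage (a): integrate over $u' \in \R^n$ using the Euclidean identity that expresses the $L^{1+\alpha}$-norm in $u'$ of a compound power of a tube-valued affine function as a constant times a compound power of the imaginary part with a shifted multi-exponent. Stage (b): integrate over $\eta' \in \C^m$, a Gaussian-type computation against the Hermitian form $F$ that again produces a compound-power shift of the cone variable. Stage (c): integrate over $t' \in \Omega$ using the basic Gindikin identity for compound powers against exponentials, or more efficiently the beta-type identity obtained by combining two such Laplace transforms. Assembling the three gamma-type constants into a single $C_{\Siegel}(\alpha,\beta)$ and verifying that the residual compound power in the variable $\zeta$ reproduces $K_{\Siegel}(\zeta,\zeta)^{\alpha-\beta}$ completes the identification.

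The principal obstacle is not the algebraic manipulation but the simultaneous bookkeeping of convergence, done coordinate-by-coordinate along the rank decomposition $1 \leq j \leq l$. Each of the three integrals above converges only on an explicit open orthant in its multi-exponent; the hypothesis $\beta > \bmin$ in (\ref{WeightCond.}) is precisely what makes $K_{\Siegel}(\cdot,\cdot)^{-\beta}$ integrable against the $t'$-slice, while $\alpha > \beta + \bint$ is precisely what keeps the exponent emerging from the $u'$- and $\eta'$-integrations inside the convergence region of the final $\Omega$-integral in stage (c). Matching these two ranges against the explicit numerics $n_j$, $q_j$, $d_j$ defining $\bmin$ and $\bint$ is the delicate step, and is the content of \cite[Section II]{B-K}, from which the stated formula is then read off.
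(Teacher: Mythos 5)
The paper offers no proof of this lemma at all: it is imported verbatim as \cite[Corollary II.4]{B-K}, and your sketch is an accurate description of the Forelli--Rudin/Gindikin slice-by-slice computation by which that reference actually establishes it, including the correct identification of where $\beta>\bmin$ and $\alpha>\beta+\bint$ enter as convergence conditions. Since you yourself defer the substantive work --- the three stage identities and the matching of their convergence orthants to the numerics of $n_j$, $q_j$, $d_j$ --- to \cite{B-K}, your argument is in substance the same as the paper's (a citation), which is fine here, though it should not be mistaken for a self-contained proof.
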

We shall obtain an equality of a minimal bounded homogeneous domain from Lemma \ref{Res.IYY}. 

\subsection{Equality for a minimal bounded homogeneous domain} \label{sect50--}
Let  $\mathcal{D}$ be a Siegel domain biholomorphic to $\mathcal{U}$ and 
$\Phi$ a biholomorphic map from $\mathcal{U}$ onto $\mathcal{D}$. 
We have an isometry
\begin{align*}
L^2_a(\Siegel,K_{\Siegel}(\zeta,\zeta)^{-\beta} dV(\zeta)) \ni f \longmapsto \det J(\Phi, \cdot)^{1+\beta} f \circ \Phi \in L^2_a(\U, dV_{\beta}) .
\end{align*}
In particular, $L^2_a(\U,dV_{\beta})\neq \{  0 \}$ for $\beta > \beta_{min}$.  

\begin{Lem}  \label{CulSiegel} 
Let $\beta> \bmin$ and $\alpha > \beta + \bint$. Then, one has
\begin{align*}
 \int_{\mathcal{U}} \n{K_{\mathcal{U}} (z,z^{\prime})}^{1+\alpha}   \n{\det J(\Phi,z^{\prime})}^{1+2\beta-\alpha}  \, dV_{\beta}(z^{\prime}) = C_{\mathcal{D}}(\alpha,\beta) K_{\mathcal{U}} (z,z)^{\alpha-\beta}  \n{\det J(\Phi,z)}^{1+2\beta-\alpha} 
\end{align*}
for any $z \in \mathcal{U}$.
\end{Lem}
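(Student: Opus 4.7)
The plan is to push the integral over $\mathcal{U}$ forward to $\mathcal{D}$ via the biholomorphism $\Phi$ and reduce to Lemma \ref{Res.IYY}. The crucial tool is the transformation law for Bergman kernels under a biholomorphism: for any $z,z' \in \mathcal{U}$,
\begin{equation*}
K_{\mathcal{U}}(z,z') = \det J(\Phi,z) \, K_{\mathcal{D}}(\Phi(z),\Phi(z')) \, \overline{\det J(\Phi,z')},
\end{equation*}
and in particular $K_{\mathcal{U}}(z',z') = |\det J(\Phi,z')|^{2} K_{\mathcal{D}}(\Phi(z'),\Phi(z'))$.

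First I would substitute these identities into the integrand on the left-hand side. This produces a factor $|\det J(\Phi,z)|^{1+\alpha}$ which can be pulled outside the integral, a factor $|K_{\mathcal{D}}(\Phi(z),\Phi(z'))|^{1+\alpha}$, a factor $K_{\mathcal{D}}(\Phi(z'),\Phi(z'))^{-\beta}$ coming from $dV_{\beta}(z')$, and an overall power of $|\det J(\Phi,z')|$. Then I would perform the change of variables $\zeta' := \Phi(z')$, which contributes an additional $|\det J(\Phi,z')|^{-2}$ from $dV(z') = |\det J(\Phi,z')|^{-2} dV(\zeta')$.

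The key bookkeeping step is collecting all powers of $|\det J(\Phi,z')|$: we get $(1+\alpha)$ from the kernel factor, $(1+2\beta-\alpha)$ from the explicit Jacobian in the integrand, $-2\beta$ from $K_{\mathcal{U}}(z',z')^{-\beta}$, and $-2$ from the change of variables, summing to exactly $0$. This cancellation is the whole point of the weight $1+2\beta-\alpha$ in the statement, and it is what makes the integral reduce cleanly to the one in Lemma \ref{Res.IYY}. After the dust settles, the integral equals
\begin{equation*}
|\det J(\Phi,z)|^{1+\alpha} \int_{\mathcal{D}} |K_{\mathcal{D}}(\Phi(z),\zeta')|^{1+\alpha} K_{\mathcal{D}}(\zeta',\zeta')^{-\beta} \, dV(\zeta').
\end{equation*}

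Applying Lemma \ref{Res.IYY} with $\zeta = \Phi(z)$ (valid since $\beta > \bmin$ and $\alpha > \beta + \bint$) turns this into $|\det J(\Phi,z)|^{1+\alpha} C_{\mathcal{D}}(\alpha,\beta) K_{\mathcal{D}}(\Phi(z),\Phi(z))^{\alpha-\beta}$. Finally, using $K_{\mathcal{D}}(\Phi(z),\Phi(z)) = |\det J(\Phi,z)|^{-2} K_{\mathcal{U}}(z,z)$, the Jacobian power becomes $1+\alpha - 2(\alpha-\beta) = 1+2\beta-\alpha$, matching the right-hand side exactly. There is no real obstacle here beyond careful bookkeeping of the Jacobian exponents; the only thing one should verify explicitly is that the hypotheses $\beta > \bmin$ and $\alpha > \beta + \bint$ transfer unchanged under $\Phi$ so that Lemma \ref{Res.IYY} applies, which is immediate since $\bmin$ and $\bint$ are invariants of the domain (and the isometry $f \mapsto \det J(\Phi,\cdot)^{1+\beta} f\circ \Phi$ between the weighted Bergman spaces shows the non-triviality condition is the same on both sides).
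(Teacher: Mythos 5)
Your proposal is correct and follows essentially the same route as the paper: change variables by $\Phi$, use the Bergman kernel transformation law so that all powers of $\n{\det J(\Phi,z^{\prime})}$ cancel, apply Lemma \ref{Res.IYY} at $\zeta=\Phi(z)$, and convert $K_{\mathcal{D}}(\Phi(z),\Phi(z))^{\alpha-\beta}$ back to $K_{\mathcal{U}}(z,z)^{\alpha-\beta}\n{\det J(\Phi,z)}^{-2(\alpha-\beta)}$. The exponent bookkeeping you give matches the paper's computation exactly.
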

\begin{proof}
Let $\zeta^{\prime} = \Phi(z^{\prime})$. Since 
\begin{align*}
dV_{\beta}(z^{\prime}) 
  &= K_{\mathcal{U}} (\Phi^{-1}(\zeta^{\prime}),\Phi^{-1}(\zeta^{\prime}))^{-\beta} \n{\det J(\Phi^{-1},\zeta^{\prime})}^2 dV(\zeta^{\prime})  \\
  &= \n{\det J(\Phi,\Phi^{-1}(\zeta^{\prime}))}^{-2(1+\beta)} K_{\mathcal{D}} (\zeta^{\prime},\zeta^{\prime})^{-\beta} dV(\zeta^{\prime}) ,
\end{align*}
we have 
\begin{align}
{} &   \int_{\mathcal{U}} \n{K_{\mathcal{U}} (z,z^{\prime})}^{1+\alpha} \n{\det J(\Phi,z^{\prime})}^{1+2\beta-\alpha} \, dV_{\beta}(z^{\prime}) \nonumber \\
   &=  \int_{\mathcal{D}} \n{K_{\mathcal{U}} (z,\Phi^{-1}(\zeta^{\prime}))}^{1+\alpha} 
    \n{\det J(\Phi,\Phi^{-1}(\zeta^{\prime}))}^{-(1+\alpha)}  K_{\mathcal{D}} (\zeta^{\prime},\zeta^{\prime})^{-\beta} \, dV(\zeta^{\prime}) . \label{intiq-1}
\end{align}
By transformation formula of the Bergman kernel, we have 
\begin{align*}
K_{\mathcal{U}} (z,\Phi^{-1}(\zeta^{\prime})) = K_{\mathcal{D}} (\Phi(z),\zeta^{\prime}) \det J(\Phi,z) \, \overline{\det J(\Phi,\Phi^{-1}(\zeta^{\prime}))} .
\end{align*}
Therefore, the right hand side of (\ref{intiq-1}) is equal to 
\begin{align}
  \int_{\mathcal{D}} \n{K_{\mathcal{D}} (\Phi(z),\zeta^{\prime})}^{1+\alpha}  \n{\det J(\Phi,z)}^{1+\alpha} 
     K_{\mathcal{D}} (\zeta^{\prime},\zeta^{\prime})^{-\beta} \, dV(\zeta^{\prime}) . \label{intiq-2}
\end{align}
By Lemma \ref{Res.IYY}, (\ref{intiq-2}) is equal to
\begin{align*}
 C_{\mathcal{D}}(\alpha,\beta)  \n{\det J(\Phi,z)}^{1+2\beta-\alpha} K_{\mathcal{U}} (z,z)^{\alpha-\beta} .
\end{align*}
\end{proof}

\begin{Cor} \label{Lp}
Let $\beta> \bmin$ and $\alpha > \beta + \bint$. For any $z \in \mathcal{U}$, the function
\begin{eqnarray*}
 g_z(w) := K_{\mathcal{U}} (w,z)^{\frac{1+\alpha}{2}}   \det J(\Phi,w)^{\frac{1+2\beta-\alpha}{2}}  
\end{eqnarray*}
is in $L^2_{a}(\mathcal{U},dV_{\beta})$. In particular, one has 
\begin{eqnarray*}
 \N{g_z}^2_{L^2(dV_{\beta})} = C_{\mathcal{D}}(\alpha,\beta) K_{\mathcal{U}} (z,z)^{\alpha-\beta}  \n{\det J(\Phi,z)}^{1+2\beta-\alpha} .
\end{eqnarray*}
\end{Cor}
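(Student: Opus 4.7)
The plan is to deduce the corollary essentially directly from Lemma \ref{CulSiegel}, with a small preliminary check that $g_z$ is a bona fide holomorphic function on $\mathcal{U}$.

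First, I would verify that $g_z \in \mathcal{O}(\mathcal{U})$. Since $\Phi$ is a biholomorphism, the Jacobian $\det J(\Phi,\cdot)$ is a nowhere-vanishing holomorphic function on $\mathcal{U}$. For fixed $z \in \mathcal{U}$, the map $w \mapsto K_{\mathcal{U}}(w,z)$ is holomorphic on $\mathcal{U}$ and nowhere-vanishing (using the transformation formula of the Bergman kernel under the transitive automorphism group of $\mathcal{U}$, together with $K_{\mathcal{U}}(w,w) > 0$). Since $\mathcal{U}$ is simply connected -- indeed contractible, as it is biholomorphic to a Siegel domain -- each of these nowhere-vanishing holomorphic functions admits a single-valued holomorphic branch of any complex power, so $g_z$ is well-defined as a holomorphic function on $\mathcal{U}$.

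Next, I would take absolute values to obtain
\begin{align*}
\N{g_z}^2_{L^2(dV_{\beta})} = \int_{\mathcal{U}} \n{K_{\mathcal{U}}(w,z)}^{1+\alpha} \n{\det J(\Phi,w)}^{1+2\beta-\alpha} \, dV_{\beta}(w).
\end{align*}
Using the conjugate symmetry $K_{\mathcal{U}}(w,z) = \overline{K_{\mathcal{U}}(z,w)}$, which gives $\n{K_{\mathcal{U}}(w,z)} = \n{K_{\mathcal{U}}(z,w)}$, this is precisely the integral appearing in Lemma \ref{CulSiegel} after the harmless renaming $z' \leftrightarrow w$. Applying that lemma yields the claimed formula
\begin{align*}
\N{g_z}^2_{L^2(dV_{\beta})} = C_{\mathcal{D}}(\alpha,\beta) \, K_{\mathcal{U}}(z,z)^{\alpha-\beta} \n{\det J(\Phi,z)}^{1+2\beta-\alpha},
\end{align*}
and in particular shows the norm is finite, so $g_z \in L^2_a(\mathcal{U}, dV_{\beta})$.

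There is no real obstacle here: the only point requiring any care is the existence of single-valued holomorphic branches of the fractional powers, which is immediate from the simple connectivity of $\mathcal{U}$ together with the nonvanishing of both factors; the identity itself is a direct rewriting of Lemma \ref{CulSiegel}.
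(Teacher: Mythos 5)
Your proof is correct and follows essentially the same route as the paper: write out $\N{g_z}^2_{L^2(dV_{\beta})}$ as the integral of $\n{K_{\mathcal{U}}(z,w)}^{1+\alpha}\n{\det J(\Phi,w)}^{1+2\beta-\alpha}$ against $dV_{\beta}$ and apply Lemma \ref{CulSiegel} directly. The only difference is your preliminary check that the fractional powers admit single-valued holomorphic branches, a point the paper leaves implicit; it is a reasonable addition and does not change the argument.
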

\begin{proof}
We have 
\begin{eqnarray*}
 \N{g_z}_{L^2(dV_{\beta})}^2 
  =  \int_{\mathcal{U}}  \n{K_{\mathcal{U}} (z,w)}^{1+\alpha}   \n{\det J(\Phi,w)}^{1+2\beta-\alpha} \, K_{\mathcal{U}} (w,w)^{-\beta}  dV(w)  .
\end{eqnarray*}
By Lemma \ref{CulSiegel}, this is equal to 
$ C_{\mathcal{D}}(\alpha,\beta) K_{\mathcal{U}} (z,z)^{\alpha-\beta}  \n{\det J(\Phi,z)}^{1+2\beta-\alpha}$.
\end{proof}

By using Corollary \ref{Lp}, we construct a positive function that satisfies the condition of 
Schur's Theorem (see \cite[Theorem 3.6]{Zhu3}) in section \ref{sect-Lemma}. 


\section{Characterlization of the compactness of composition operators} \label{sect50}
\subsection{Proof of the characterization of the compactness} \label{sect50-}
By using the lemmas in section \ref{sect-Lemma}, we prove Theorem \ref{MainComp0}. 
\begin{Thm} \label{MainComp}
If $C_{\varphi}$ is bounded on $L^q_a(\mathcal{U},dV_{\sita})$, 
then the following conditions are equivalent for $\naka > \sita + \bint$. \\
$(i)$ \ $C_{\varphi}$ is a compact operator on $L^p_a(\mathcal{U},dV_{\naka})$. \\
$(ii)$ \ $\displaystyle{\lim_{ z \rightarrow \partial \mathcal{U}} \frac{K_{\mathcal{U}} \left( \varphi(z), \varphi(z) \right) }{K_{\mathcal{U}}(z,z)} = 0 }$. \label{CompositionCptness}
\end{Thm}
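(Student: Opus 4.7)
The plan is to first reduce to the case $p=q=2$, then handle each direction separately. By Lemma \ref{Compcptness}, compactness of $C_\varphi$ on $L^p_a(\U, dV_\naka)$ is equivalent for all $p>0$ (it is a property of the measure $\mu_{\varphi,\naka}$), so I work with $p=2$. Also, by Theorem \ref{BdAndCptCp}, the boundedness hypothesis promotes to boundedness of $C_\varphi$ on both $L^2_a(\U, dV_\sita)$ and $L^2_a(\U, dV_\naka)$; in particular $\mu_{\varphi,\sita}$ is a Carleson measure for $L^2_a(\U, dV_\sita)$.

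For $(i) \Rightarrow (ii)$: If $C_\varphi$ is compact on $L^2_a(\U, dV_\naka)$, then so is $C_\varphi^*$. Since $\{k_z^{(\naka)}\}$ converges weakly to $0$ as $z \to \partial \U$ (cf.\ the proof of Theorem \ref{W-VanCarl} together with density of $H^\infty(\U)$), compactness forces $\N{C_\varphi^* k_z^{(\naka)}}_{L^2(dV_\naka)} \to 0$. By the identity (\ref{KBDD}) in the proof of Lemma \ref{Kvarphi}, this squared norm equals $\left(K_\U(\varphi(z),\varphi(z))/K_\U(z,z)\right)^{1+\naka}$, from which (ii) follows.

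For $(ii) \Rightarrow (i)$, by Lemma \ref{Compcptness}(iii) it suffices to show $\widetilde{\mu_{\varphi,\naka}}(z) \to 0$ as $z \to \partial \U$. Starting from
\begin{align*}
\widetilde{\mu_{\varphi,\naka}}(z) = C_\naka K_\U(z,z)^{-(1+\naka)} \int_\U \n{K_\U(\varphi(u), z)}^{2(1+\naka)} dV_\naka(u),
\end{align*}
I would apply the pointwise estimate $\n{K_\U(a,b)}^2 \leq K_\U(a,a) K_\U(b,b)$ to the factor $\n{K_\U(\varphi(u),z)}^{2(\naka-\sita)}$; the exponent is strictly positive since $\naka > \sita + \bint$ and $\bint \geq 0$. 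Combined with $dV_\naka = K_\U(u,u)^{-(\naka-\sita)} dV_\sita$, this yields
\begin{align*}
\widetilde{\mu_{\varphi,\naka}}(z) \leq C K_\U(z,z)^{-(1+\sita)} \int_\U \left(\frac{K_\U(\varphi(u),\varphi(u))}{K_\U(u,u)}\right)^{\naka-\sita} \n{K_\U(\varphi(u), z)}^{2(1+\sita)} dV_\sita(u).
\end{align*}
Given $\varepsilon>0$, hypothesis (ii) provides a compact $K_\varepsilon \subset \U$ outside which the ratio $K_\U(\varphi(u),\varphi(u))/K_\U(u,u)$ is below $\varepsilon$. Over $\U \setminus K_\varepsilon$, dominate the ratio by $\varepsilon^{\naka-\sita}$, pull back by $\varphi$ onto $\mu_{\varphi,\sita}$, and combine its Carleson property with the reproducing-kernel identity $\int_\U \n{K_\U(w,z)}^{2(1+\sita)} dV_\sita(w) = C_\sita^{-1} K_\U(z,z)^{1+\sita}$; this piece is $O(\varepsilon^{\naka-\sita})$ uniformly in $z$. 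Over $K_\varepsilon$ the ratio is bounded by Lemma \ref{Kvarphi}, while $\n{K_\U(\varphi(u),z)}^{2(1+\sita)}/K_\U(z,z)^{1+\sita} = C_\sita^{-1} \n{k_z^{(\sita)}(\varphi(u))}^2 \to 0$ uniformly for $u \in K_\varepsilon$ (since $\varphi(K_\varepsilon)$ is a compact subset of $\U$ and $k_z^{(\sita)} \to 0$ uniformly on compacts). Bounded convergence then makes this piece vanish, and letting $\varepsilon \to 0$ after taking $\limsup$ finishes the argument.

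The hard part of this approach is the interplay between the Cauchy-Schwarz split (which only needs $\naka > \sita$) and the Carleson bound for $\mu_{\varphi,\sita}$ (which needs boundedness of $C_\varphi$ on $L^2_a(dV_\sita)$). The author's route instead goes through Schur's theorem with a positive test function supplied by Corollary \ref{Lp}; that function is calibrated by Lemma \ref{CulSiegel} at parameters $(\alpha,\beta) = (\naka, \sita)$, and the requirement $\alpha > \beta + \bint$ in that lemma is precisely what dictates the theorem's hypothesis $\naka > \sita + \bint$.
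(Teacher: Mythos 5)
Your proof is correct, and the $(ii)\Rightarrow(i)$ direction takes a genuinely different route from the paper's. The paper writes $C_\varphi C_\varphi^{\ast}=S$, dominates $S$ by the positive-kernel operator $S^+$, splits $S^+=S^+_{1,r}+S^+_{2,r}+S^+_{3,r}$ by boundary cutoffs, shows the first two pieces are Hilbert--Schmidt (Lemma \ref{Hilb-Schm-op}), and controls $\N{S^+_{3,r}}$ by Schur's test with the weight $h(z)=K_{\mathcal{U}}(z,z)^{\naka-\sita}\n{\det J(\Phi,\varphi(z))}^{1+2\sita-\naka}$; the calibration of $h$ rests on the Siegel-domain integral formula (Lemma \ref{CulSiegel} via Corollary \ref{Lp}), and that is exactly where the hypothesis $\naka>\sita+\bint$ is consumed. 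You instead estimate the Berezin transform $\widetilde{\mu_{\varphi,\naka}}$ directly: the Cauchy--Schwarz inequality $\n{K_{\mathcal{U}}(a,b)}^2\le K_{\mathcal{U}}(a,a)K_{\mathcal{U}}(b,b)$, applied only to the excess factor $\n{K_{\mathcal{U}}(\varphi(u),z)}^{2(\naka-\sita)}$, trades the problem down to level $\sita$, where the Carleson property of $\mu_{\varphi,\sita}$ (available from Theorem \ref{BdCp} and Lemma \ref{CompBddness}) and the reproducing identity $\int_{\mathcal{U}}\n{K_{\mathcal{U}}(w,z)}^{2(1+\sita)}\,dV_{\sita}(w)=C_{\sita}^{-1}K_{\mathcal{U}}(z,z)^{1+\sita}$ close the estimate uniformly in $z$; the near-boundary/compact splitting and the uniform decay of $k_z^{(\sita)}$ on compacts then give the vanishing, and Lemma \ref{Compcptness} converts this into compactness. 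I checked the order of limits ($z\to\partial\mathcal{U}$ first, then $\varepsilon\to 0$) and the membership $K_{\mathcal{U}}(\cdot,z)^{1+\sita}\in L^2_a(\mathcal{U},dV_{\sita})$; both are in order. Your route buys two things: it avoids the Siegel-domain machinery of section \ref{sect4.00} entirely, and it uses the hypothesis $\naka>\sita+\bint$ only through the weaker consequence $\naka>\sita$ (positivity of the Cauchy--Schwarz exponent is all that is needed), so it in fact establishes the implication on the larger parameter range $\naka>\sita$, consistent with Zhu's unit-ball case where $\bint=0$. The $(i)\Rightarrow(ii)$ direction is the same as the paper's, via (\ref{KBDD}) and the weak convergence of $k_z^{(\naka)}$ to $0$.
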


\begin{proof}
It is enough to prove $p=q=2$. 
First, we prove that $(i)$ implies $(ii)$. 
Assume that $C_{\varphi}$ is a compact operator on $L^2_a(\mathcal{U},dV_{\naka})$. Then, $C^{\ast}_{\varphi}$ is also compact. 
Since $\{ k_{z}^{(\naka)} \}$ converges to $0$ 
uniformly on compact subsets of $\U$ as $z \rightarrow \partial \mathcal{U}$, 
we have $\Vert C^{\ast}_{\varphi} k_z^{(\naka)} \Vert_{L^2(dV_{\beta})} \rightarrow 0$ as $z \rightarrow \partial \mathcal{U}$. 
From (\ref{KBDD}), we obtain $(ii)$. 

Next, we prove that $(ii)$ implies $(i)$. 
For $f \in L^2_a(\mathcal{U},dV_{\naka})$, let 
\begin{eqnarray*}
   S f(z) := \int_{\mathcal{U}} K_{\mathcal{U}}^{(\naka)} \left( \varphi(z), \varphi(w) \right)  f(w) \, dV_{\naka}(w) . 
\end{eqnarray*}
Since $C_{\varphi}$ is a bounded operator on $L^2_a(\mathcal{U},dV_{\naka})$, 
we have $C_{\varphi} C_{\varphi}^{\ast} = S$ by (\ref{CptKakikae}). 
Therefore the compactness of $C_{\varphi}$ is equivalent to the compactness of $S$. 
Hence, it is sufficient to prove that $S^+$ is a compact operator on $L^2(\mathcal{U},dV_{\naka})$, where
\begin{eqnarray*}
  S^+ f(z) := \int_{\mathcal{U}} \n{K_{\mathcal{U}}^{(\naka)} \left( \varphi(z), \varphi(w) \right)}  f(w) \, dV_{\naka}(w) 
\end{eqnarray*}
for $f \in L^2(\mathcal{U},dV_{\naka})$. 
For $r>0$, let $ \mathcal{U}_r := \{ z \in \mathcal{U} \mid {\rm dist} (z, \partial \mathcal{U}) < r \} $. We define 
\begin{align*}
 K^+_{1,r} (z,w) &:= \chi_{\mathcal{U} \backslash  \mathcal{U}_r} (w) \n{ K_{\mathcal{U}}^{(\naka)}  \left( \varphi(z), \varphi(w) \right)}  , \\
 K^+_{2,r} (z,w) &:= \chi_{\mathcal{U} \backslash  \mathcal{U}_r} (z) \, \chi_{\mathcal{U}_r} (w) \n{ K_{\mathcal{U}}^{(\naka)}  \left( \varphi(z), \varphi(w) \right)}   , \\
 K^+_{3,r} (z,w) &:= \chi_{\mathcal{U}_r} (z) \, \chi_{\mathcal{U}_r} (w) \n{ K_{\mathcal{U}}^{(\naka)}  \left( \varphi(z), \varphi(w) \right)}  ,
\end{align*}
and $S^+_{j,r}$ by integral operators on $L^2(\mathcal{U},dV_{\naka})$ with kernel $K^+_{j,r}$. 
Then, we have 
$$  S^+ = S^+_{1,r}+S^+_{2,r}+S^+_{3,r} . $$ 
We will prove that $S^+_{1,r}$ and $S^+_{2,r}$ are compact operators on $L^2(\mathcal{U},dV_{\naka})$ for any $r>0$ and 
$\N{S^+_{3,r}} \rightarrow 0$ as $r \rightarrow 0$ in section \ref{sect-Lemma}. 
Using these results, we see that $S^+$ is a compact operator on $L^2(\mathcal{U},dV_{\naka})$. 
\end{proof}

\subsection{Some Lemmas} \label{sect-Lemma}
In this subsection, we show some properties of the operators defined in the proof of Theorem \ref{MainComp}.
Since we assumed that $C_{\varphi}$ is bounded on $L^p_a(\mathcal{U},dV_{\sita})$, we have the following lemma. 
\begin{Lem}\ \label{Hilb-Schm-op}
The operators $S^+_{1,r}$ and $S^+_{2,r}$ are compact on $L^2(\mathcal{U},dV_{\naka})$. 
\end{Lem}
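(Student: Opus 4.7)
The plan is to show that both $S^+_{1,r}$ and $S^+_{2,r}$ are Hilbert--Schmidt operators on $L^2(\mathcal{U}, dV_{\naka})$, from which compactness is automatic. The key observation is that the inner integrals appearing after Fubini can be recognized as squared $L^2$-norms of $C_{\varphi}$ applied to reproducing kernels; the boundedness of $C_{\varphi}$ on $L^2_a(\mathcal{U}, dV_{\naka})$ — which we obtain from the hypothesis combined with Theorem \ref{BdCp}, since $\naka > \sita + \bint \geq \sita$ — then lets us estimate them in terms of $K_{\mathcal{U}}(\varphi(w), \varphi(w))$, which we can in turn control by $K_{\mathcal{U}}(w,w)$ via Lemma \ref{Kvarphi}.

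For $S^+_{1,r}$, Fubini gives
\begin{align*}
\N{S^+_{1,r}}^2_{HS} = \int_{\mathcal{U} \setminus \mathcal{U}_r} \left( \int_{\mathcal{U}} \n{K^{(\naka)}_{\mathcal{U}}(\varphi(z), \varphi(w))}^2 \, dV_{\naka}(z) \right) dV_{\naka}(w) .
\end{align*}
For each fixed $w$, the inner integral is $\N{C_{\varphi} K^{(\naka)}_{\varphi(w)}}^2_{L^2(dV_{\naka})}$, where $K^{(\naka)}_{\zeta}$ denotes $K^{(\naka)}_{\mathcal{U}}(\cdot,\zeta)$. Boundedness of $C_{\varphi}$ on $L^2_a(\mathcal{U}, dV_{\naka})$ together with $\N{K^{(\naka)}_{\zeta}}^2 = K^{(\naka)}_{\mathcal{U}}(\zeta,\zeta)$ bounds this by $\N{C_{\varphi}}^2 K^{(\naka)}_{\mathcal{U}}(\varphi(w), \varphi(w))$. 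Writing $K^{(\naka)}_{\mathcal{U}}(\zeta,\zeta) = C_{\naka} K_{\mathcal{U}}(\zeta,\zeta)^{1+\naka}$ and invoking Lemma \ref{Kvarphi}, this is dominated by $C K_{\mathcal{U}}(w,w)^{1+\naka}$; combined with $dV_{\naka}(w) = K_{\mathcal{U}}(w,w)^{-\naka}\, dV(w)$, we arrive at
\begin{align*}
\N{S^+_{1,r}}^2_{HS} \leq C \int_{\mathcal{U} \setminus \mathcal{U}_r} K_{\mathcal{U}}(w,w) \, dV(w).
\end{align*}
Since $\mathcal{U} \setminus \mathcal{U}_r$ is a compact subset of $\mathcal{U}$ and $K_{\mathcal{U}}(w,w)$ is continuous on $\mathcal{U}$, this integral is finite.

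For $S^+_{2,r}$ the argument is essentially dual. Integrating in $z \in \mathcal{U} \setminus \mathcal{U}_r$ on the outside and $w \in \mathcal{U}_r$ on the inside, we enlarge $\mathcal{U}_r$ to $\mathcal{U}$ and use the Hermitian symmetry $\n{K^{(\naka)}_{\mathcal{U}}(\varphi(z), \varphi(w))} = \n{K^{(\naka)}_{\mathcal{U}}(\varphi(w), \varphi(z))}$; the inner integral becomes $\N{C_{\varphi} K^{(\naka)}_{\varphi(z)}}^2_{L^2(dV_{\naka})}$, and the estimate proceeds identically. The only genuinely delicate step in either case is the reproducing-kernel identification of the inner integral; once that is in place, the argument reduces to the compactness of $\mathcal{U} \setminus \mathcal{U}_r$ and the continuity of $K_{\mathcal{U}}(\cdot,\cdot)$.
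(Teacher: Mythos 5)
Your proposal is correct and follows essentially the same route as the paper: show both operators are Hilbert--Schmidt by recognizing the inner integral as $\N{C_{\varphi}K^{(\naka)}_{\varphi(w)}}^2_{L^2(dV_{\naka})}$, bound it via the boundedness of $C_{\varphi}$ on $L^2_a(\mathcal{U},dV_{\naka})$ (from Theorem \ref{BdCp}) and Lemma \ref{Kvarphi}, and reduce to the finiteness of $\int_{\mathcal{U}\setminus\mathcal{U}_r}K_{\mathcal{U}}(w,w)\,dV(w)$ over the compact set $\mathcal{U}\setminus\mathcal{U}_r$. Your treatment of $S^+_{2,r}$ by symmetry is exactly what the paper's ``similarly'' abbreviates, and you make the compactness/continuity justification more explicit than the paper does.
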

\begin{proof}
It is enough to prove $K^+_{1,r}$ and $K^+_{2,r}$ are in $L^2(\mathcal{U} \times \mathcal{U},dV_{\naka} \times dV_{\naka})$ 
(for example, see \cite[Theorem 3.5]{Zhu3}). 
For $w \in \U$, let 
$$ K_{\varphi(w)}^{(\naka)}(z)  :=  K_{\mathcal{U}}^{(\naka)} \left( z,\varphi(w) \right).  $$
Then, $K_{\varphi(w)}^{(\naka)} \in L^2_a(\mathcal{U}, dV_{\naka})$ and we have 
\begin{align}
\N{K^+_{1,r}}_{L^2(\mathcal{U} \times \mathcal{U})}^2 
     &=  \int_{\mathcal{U} \backslash  \mathcal{U}_r} \left\{ \int_{\mathcal{U}} \n{ K_{\mathcal{U}}^{(\naka)} \left( \varphi(z), \varphi(w) \right)}^2 \, dV_{\naka}(z) \right\}  \, dV_{\naka}(w) \nonumber  \\
     &=  \int_{\mathcal{U} \backslash  \mathcal{U}_r} \N{C_{\varphi} K_{\varphi(w)}^{(\naka)}}^2_{L^2( dV_{\naka})} dV_{\naka}(w) .  \label{Cpt1}
\end{align} 
Since $C_{\varphi}$ is a bounded operator on $L^p_a(\mathcal{U}, dV_{\sita})$, 
$C_{\varphi} $ is bounded on $L^2_a(\mathcal{U}, dV_{\naka})$ by Theorem $\ref{BdCp}$. 
Hence, we have 
\begin{align}     
 \N{C_{\varphi} K_{\varphi(w)}^{(\naka)}}^2_{L^2( dV_{\naka})}
    \leq C  \N{ K_{\varphi(w)}^{(\naka)}}^2_{L^2( dV_{\naka})} 
     =   C K_{\mathcal{U}}^{(\naka)} \left( \varphi(w), \varphi(w) \right) 
     \leq   C K_{\mathcal{U}}(w,w)^{1+\naka}   ,  \label{Cpt2}
\end{align}
where the last inequality follows from Lemma $\ref{Kvarphi}$. Substituting $(\ref{Cpt2})$ to $(\ref{Cpt1})$, we obtain 
\begin{align*}
\N{K^+_{1,r}}_{L^2(\mathcal{U} \times \mathcal{U})}^2 
  &\leq  C \int_{\mathcal{U} \backslash  \mathcal{U}_r}  K_{\mathcal{U}} (w,w)^{1+\naka}  \, dV_{\naka}(w)   \\
     &=  C \int_{\mathcal{U} \backslash  \mathcal{U}_r}  K_{\mathcal{U}} (w,w)  \, dV(w)  \\
      &< \infty . 
\end{align*}
Similarly, we have $\N{K^+_{2,r}}_{L^2(\mathcal{U} \times \mathcal{U})} < \infty$. 
\end{proof}

Assume that $\naka > \sita + \bint$ and $C_{\varphi}$ is a bounded operator on $L^p_a(\mathcal{U}, dV_{\sita})$. 
Then, we obtain Lemma \ref{CompMainLemma}, 
which plays an impotant role in the proof of Theorem \ref{MainComp}. 
The assumption $\naka > \sita + \bint$ is only used to prove Lemma \ref{CompMainLemma}. 
\begin{Lem} \label{CompMainLemma}
For $z \in \mathcal{U}$, let 
\begin{eqnarray*}
h(z) := K_{\mathcal{U}}(z,z)^{\naka-\sita} \n{\det J (\Phi, \varphi(z))}^{1+2\sita-\naka} .
\end{eqnarray*}
Then, one has
\begin{align}
\int_{\mathcal{U}} K^+_{3,r}(z,w) h(w) \, dV_{\naka}(w)
   \leq C \, \chi_{\mathcal{U}_r} (z) \left( \frac{K_{\mathcal{U}} \left( \varphi(z), \varphi(z) \right) }{K_{\mathcal{U}}(z,z)} \right)^{\naka-\sita}  h (z). \label{norngoal}
\end{align}
\end{Lem}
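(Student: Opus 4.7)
The plan is to recast the left-hand side of (\ref{norngoal}) as a squared $L^2(dV_{\sita})$-norm of a holomorphic function pulled back by $\varphi$, and then apply the $L^2_a(\mathcal{U},dV_{\sita})$-boundedness of $C_{\varphi}$ together with the integral formula of Corollary \ref{Lp}. Dropping the cutoff $\chi_{\mathcal{U}_r}(w)$ on the $w$-variable (which only enlarges the integral) and expanding $K_{\mathcal{U}}^{(\naka)}(\zeta,\xi) = C_{\naka}K_{\mathcal{U}}(\zeta,\xi)^{1+\naka}$, one substitutes the definition of $h(w)$ and observes that the factor $K_{\mathcal{U}}(w,w)^{\naka-\sita}$ hidden in $h(w)$ combines with $dV_{\naka}$ via the identity $K_{\mathcal{U}}(w,w)^{\naka-\sita}\,dV_{\naka}(w) = dV_{\sita}(w)$. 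This reduces the left-hand side of (\ref{norngoal}) to a constant multiple of
\begin{align*}
\chi_{\mathcal{U}_r}(z)\int_{\mathcal{U}}\n{K_{\mathcal{U}}(\varphi(z),\varphi(w))}^{1+\naka}\n{\det J(\Phi,\varphi(w))}^{1+2\sita-\naka}\,dV_{\sita}(w).
\end{align*}

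The next step is to recognize this integral as the squared norm of $C_{\varphi}$ applied to a specific Bergman-type kernel. With $\alpha=\naka$ and $\beta=\sita$ in Corollary \ref{Lp}, which is legitimate because $\naka > \sita + \bint$ and $\sita>\bmin$, set
\begin{align*}
g_{\varphi(z)}(u) := K_{\mathcal{U}}(u,\varphi(z))^{(1+\naka)/2}\det J(\Phi,u)^{(1+2\sita-\naka)/2} \in L^2_a(\mathcal{U},dV_{\sita}).
\end{align*}
Since $\n{g_{\varphi(z)}(\varphi(w))}^2$ is exactly the integrand above, the displayed integral equals $\N{C_{\varphi}g_{\varphi(z)}}^2_{L^2(dV_{\sita})}$. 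By Theorem \ref{BdCp} the assumed $L^q_a(\mathcal{U},dV_{\sita})$-boundedness of $C_{\varphi}$ upgrades to $L^2_a(\mathcal{U},dV_{\sita})$-boundedness, so this norm is bounded by $C\N{g_{\varphi(z)}}^2_{L^2(dV_{\sita})}$, and Corollary \ref{Lp} in turn evaluates the latter as $C\,K_{\mathcal{U}}(\varphi(z),\varphi(z))^{\naka-\sita}\n{\det J(\Phi,\varphi(z))}^{1+2\sita-\naka}$. Pulling out a factor of $K_{\mathcal{U}}(z,z)^{\naka-\sita}$ to reconstitute $h(z)$ gives precisely (\ref{norngoal}).

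The main conceptual point, I expect, is noticing that $h$ is designed exactly so that $h(w)\,dV_{\naka}(w)$ reshapes into $\n{(g_{\varphi(z)}\circ\varphi)(w)}^2\,dV_{\sita}(w)$; once this matching is in place, Theorem \ref{BdCp} (to pass from $L^q$- to $L^2$-boundedness of $C_{\varphi}$) and Corollary \ref{Lp} (whose use is precisely what forces the hypothesis $\naka > \sita + \bint$) do the remaining work, and no further estimates beyond the transformation law $dV_{\naka}=K_{\mathcal{U}}(\cdot,\cdot)^{\sita-\naka}dV_{\sita}$ are required.
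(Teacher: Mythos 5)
Your proposal is correct and follows essentially the same route as the paper: both reduce $h(w)\,dV_{\naka}(w)$ to $\n{\det J(\Phi,\varphi(w))}^{1+2\sita-\naka}\,dV_{\sita}(w)$, recognize the resulting integral as $\N{C_{\varphi}g_z}^2_{L^2(dV_{\sita})}$ for the function $g_z$ of Corollary \ref{Lp} with $\alpha=\naka$, $\beta=\sita$, and then invoke Theorem \ref{BdCp} for the $L^2_a(\mathcal{U},dV_{\sita})$-boundedness of $C_{\varphi}$ and Corollary \ref{Lp} to evaluate $\N{g_z}^2_{L^2(dV_{\sita})}$. The only cosmetic difference is that you discard the cutoff $\chi_{\mathcal{U}_r}(w)$ at the outset rather than at the step where the integral over $\mathcal{U}_r$ is enlarged to one over $\mathcal{U}$.
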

\begin{proof}
For $z \in \mathcal{U}$, we have 
\begin{align}
{} & \int_{\mathcal{U}} K^+_{3,r}(z,w) h(w) \, dV_{\naka}(w)  \nonumber \\
     &=  \int_{\mathcal{U}} \chi_{\mathcal{U}_r} (z) \, \chi_{\mathcal{U}_r} (w) \n{K_{\mathcal{U}} (\varphi(z),\varphi(w))}^{1+\naka}  \det J( \Phi,\varphi(w))^{1+2\sita-\naka} \, dV_{\sita}(w) . \label{Kai1}
\end{align}
Here, we define a holomorphic function $g_z$ by
$$  g_z(w) := \left\{ K_{\mathcal{U}} (w, \varphi(z))^{1+\naka} \det J( \Phi, w)^{1+2\sita-\naka} \right\}^{\frac{1}{2}} . $$
Then, the right hand side of (\ref{Kai1}) is equal to 
\begin{align}
  \chi_{\mathcal{U}_r} (z) \int_{\mathcal{U}_r} \n{g_z (\varphi(w))}^2  \, dV_{\sita}(w)  
    \leq   \chi_{\mathcal{U}_r} (z) \int_{\mathcal{U}} \n{C_{\varphi} g_z (w)}^2  \, dV_{\sita}(w) .  \label{Kai2}  
\end{align}
Since $\sita > \bmin$ and $\naka > \sita + \bint$, the function $g_z$ is in $L^2_a(\mathcal{U}, dV_{\sita})$ by Corollary \ref{Lp}. 
Moreover, 
since $C_{\varphi}$ is a bounded operator on $L^p_a(\mathcal{U}, dV_{\sita})$ by assumption, 
$C_{\varphi} $ is bounded on $L^2_a(\mathcal{U}, dV_{\sita})$ by Theorem $\ref{BdCp}$. 
Therefore, we have 
\begin{align}
\int_{\mathcal{U}} K^+_{3,r}(z,w) h(w) \, dV_{\naka}(w) 
   &\leq  \chi_{\mathcal{U}_r} (z)  \N{C_{\varphi} g_z }^2_{L^2(dV_{\sita})}  \nonumber \\
   &\leq C  \chi_{\mathcal{U}_r} (z) \N{ g_z }^2_{L^2(dV_{\sita})} . \label{norm1}
\end{align}
On the other hand, we have 
\begin{align}
\N{ g_z }^2_{L^2(dV_{\sita})} =  K_{\mathcal{U}} ( \varphi(z), \varphi(z))^{\naka-\sita} \det J( \Phi,\varphi(z))^{1+2\sita-\naka}  \label{norm2}
\end{align}
by Corollary \ref{Lp}. Substituting (\ref{norm2}) to (\ref{norm1}), we obtain (\ref{norngoal}). 
\end{proof}

As we have already noted, the following lemma completes the proof of Theorem \ref{MainComp}. 
\begin{Lem} \label{PropConv}
One has $\N{S^+_{3,r}} \rightarrow 0$ as $r \rightarrow 0$. 
\end{Lem}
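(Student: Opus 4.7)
The plan is to apply Schur's test (as cited in the text, \cite[Theorem 3.6]{Zhu3}) to the positive symmetric kernel $K^+_{3,r}$, using as test function the function $h$ introduced in Lemma \ref{CompMainLemma}. The key observation is that
$$K^+_{3,r}(z,w) = \chi_{\mathcal{U}_r}(z)\,\chi_{\mathcal{U}_r}(w)\,\bigl|K_{\mathcal{U}}^{(\beta)}(\varphi(z),\varphi(w))\bigr|$$
is symmetric in $(z,w)$ because $\chi_{\mathcal{U}_r}$ is applied to \emph{both} variables and $|K_{\mathcal{U}}^{(\beta)}(\cdot,\cdot)|$ is Hermitian-symmetric, so a single application of Lemma \ref{CompMainLemma} yields both Schur inequalities at once.

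Concretely, set
$$\varepsilon(r) := \sup_{z \in \mathcal{U}_r} \frac{K_{\mathcal{U}}\bigl(\varphi(z),\varphi(z)\bigr)}{K_{\mathcal{U}}(z,z)}.$$
Lemma \ref{CompMainLemma} then rewrites as
$$\int_{\mathcal{U}} K^+_{3,r}(z,w)\, h(w)\, dV_{\beta}(w) \leq C\,\varepsilon(r)^{\beta-\beta_0}\, h(z) \qquad (z \in \mathcal{U}),$$
and the kernel symmetry $K^+_{3,r}(z,w) = K^+_{3,r}(w,z)$ gives the companion estimate
$$\int_{\mathcal{U}} K^+_{3,r}(z,w)\, h(z)\, dV_{\beta}(z) \leq C\,\varepsilon(r)^{\beta-\beta_0}\, h(w) \qquad (w \in \mathcal{U}).$$
Applying Schur's test with test functions $h_1 = h_2 = h$ yields
$$\bigl\|S^+_{3,r}\bigr\|_{L^2(dV_{\beta}) \to L^2(dV_{\beta})} \leq C\,\varepsilon(r)^{\beta-\beta_0}.$$

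To conclude, I would invoke hypothesis $(ii)$ of Theorem \ref{MainComp}, namely $K_{\mathcal{U}}(\varphi(z),\varphi(z))/K_{\mathcal{U}}(z,z) \to 0$ as $z \to \partial\mathcal{U}$. Combined with the nesting $\mathcal{U}_{r'} \subset \mathcal{U}_r$ for $r' \leq r$, this gives $\varepsilon(r) \to 0$ as $r \to 0$. Since $\beta - \beta_0 > \beta_{\mathrm{int}} \geq 0$, we obtain $\|S^+_{3,r}\| \to 0$, as required.

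Essentially no obstacle remains at this point: the analytic heart of the matter lies in Lemma \ref{CompMainLemma} (which in turn rests on the $L^2$-equality of Corollary \ref{Lp} and the boundedness of $C_{\varphi}$ on $L^2_a(\mathcal{U}, dV_{\beta_0})$). The only subtleties are (a) noting the $(z,w)$-symmetry of the indicator-cut-off kernel so that one lemma supplies both Schur integrability bounds, and (b) identifying the uniform supremum $\sup_{z \in \mathcal{U}_r} K_{\mathcal{U}}(\varphi(z),\varphi(z))/K_{\mathcal{U}}(z,z)$ as precisely the quantity governed by the boundary hypothesis.
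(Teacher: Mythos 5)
Your proposal is correct and follows essentially the same route as the paper: define the supremum $M(r)$ of the kernel quotient over $\mathcal{U}_r$, feed the single inequality of Lemma \ref{CompMainLemma} into Schur's test with test function $h$ to get $\N{S^+_{3,r}} \leq C M(r)$, and let condition $(ii)$ drive $M(r) \to 0$. The only difference is cosmetic: you spell out the $(z,w)$-symmetry of $K^+_{3,r}$ that makes one inequality suffice, which the paper leaves implicit.
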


\begin{proof}
Put 
\begin{eqnarray*}
M(r) := \sup_{z \in \mathcal{U}_r}  \left\{ \frac{K_{\mathcal{U}} \left( \varphi(z), \varphi(z) \right) }{K_{\mathcal{U}}(z,z)} \right\}^{\naka-\sita}   .   
\end{eqnarray*}
By Lemma \ref{CompMainLemma}, we have 
\begin{eqnarray*}
\int_{\mathcal{U}} K^+_{3,r}(z,w) h(w) \, dV_{\naka}(w)
   \leq C \, M(r) h (z) .
\end{eqnarray*}
By using Schur's Theorem, $S^+_{3,r}$ is a bounded operator on $L^2_a(\mathcal{U},dV_{\naka})$ with norm 
not exceeding $CM(r)$. 
By (ii) of Theorem \ref{MainComp}, 
we obtain $M(r) \rightarrow 0$ as $r \rightarrow 0$. Hence we have $\N{S^+_{3,r}} \rightarrow 0$ as $r \rightarrow 0$.
\end{proof}

\section{Examples}\label{sect60}
We apply Theorem \ref{MainComp0} for some examples. 
\subsection{The unit ball}
Let $\mathcal{U}$ be the unit ball $\mathbb{B}^d$. 
Then, we have $l=1$ and $n_1=0, d_1=-1,q_1=d-1$. 
Hence, we have $\bmin =- \frac{1}{d+1}$ and $\bint =0$. 
Therefore, we obtain the following; 
\begin{Cor}[\mbox{\cite[Theorem 4.1]{ZhuComp}}]  \label{BallCor} 
Suppose $\naka > - \frac{1}{d+1}$. 
If the composition operator $C_{\varphi}$ is bounded on $L^q_a(\mathbb{B}^d, dV_{\sita})$ for some $- \frac{1}{d+1} < \sita < \naka$, 
then $C_{\varphi}$ is compact on $L^p_a(\mathbb{B}^d,dV_{\naka})$ if and only if 
\begin{align*}
 \lim_{ z \rightarrow \partial \mathbb{B}^d} \frac{K_{\mathbb{B}^d} \left( \varphi(z), \varphi(z) \right) }{K_{\mathbb{B}^d}(z,z)} = 0 . 
\end{align*}
\end{Cor}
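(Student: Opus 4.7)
The plan is purely computational: realize $\mathbb{B}^d$ as a homogeneous Siegel domain, read off the structural invariants $l$, $n_j$, $d_j$, $q_j$ appearing in section \ref{sect22}, substitute into the explicit formulas for $\bmin$ and $\bint$, and then apply Theorem \ref{MainComp0} directly.

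First I would recall that $\mathbb{B}^d$ is biholomorphic (via the Cayley transform) to the Siegel upper half-space
\begin{align*}
\Siegel = \left\{(\xi,\eta) \in \C \times \C^{d-1} : \tfrac{\xi - \overline{\xi}}{2i} - |\eta|^2 > 0 \right\},
\end{align*}
in which the convex cone is $\Omega = (0, \infty) \subset \R$ and the Hermitian form is $F(\eta,\eta') = \langle \eta, \eta' \rangle$. The cone $\Omega$ has rank $l=1$, so there is a single triple $(n_1, d_1, q_1)$ to identify. Following Gindikin's conventions (as used in \cite{B-K} and \cite{Gin}), a direct inspection of this rank-one example gives $n_1 = 0$, $d_1 = -1$, and $q_1 = d-1$. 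As a consistency check, the Bergman kernel formula in section \ref{sect22} with exponent $2\underline{d} - \underline{q} = -2-(d-1) = -(d+1)$ recovers the standard kernel $K_{\Siegel}(\zeta,\zeta') = C\bigl(\tfrac{\xi - \overline{\xi'}}{2i} - F(\eta,\eta')\bigr)^{-(d+1)}$.

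Next I would substitute $(n_1, d_1, q_1) = (0, -1, d-1)$ into the formulas of section \ref{sect22}. Since $-2d_1 + q_1 = 2 + (d-1) = d+1$, one obtains
\begin{align*}
\bmin = -\frac{n_1 + 2}{2(-2d_1+q_1)} = -\frac{2}{2(d+1)} = -\frac{1}{d+1}, \qquad \bint = \frac{n_1}{2(-2d_1+q_1)} = 0.
\end{align*}

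Finally, with $\bint = 0$, the hypothesis $\naka > \sita + \bint$ of Theorem \ref{MainComp0} collapses to $\naka > \sita$, which is precisely the assumption of the corollary; combined with $\sita > -\frac{1}{d+1} = \bmin$ and the assumed boundedness of $C_\varphi$ on $L^q_a(\mathbb{B}^d, dV_{\sita})$, Theorem \ref{MainComp0} applies verbatim to yield the desired equivalence. There is no substantive obstacle here; the only mild bookkeeping point is matching the sign convention for $d_j$ in section \ref{sect22} against the convention of \cite{I-Y}, which is explicitly flagged in the paper itself.
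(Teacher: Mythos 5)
Your proposal is correct and follows exactly the paper's own route: identify the Siegel-domain invariants $l=1$, $n_1=0$, $d_1=-1$, $q_1=d-1$ for the unit ball, compute $\bmin=-\frac{1}{d+1}$ and $\bint=0$, and apply Theorem \ref{MainComp0}. The consistency check via the Bergman kernel exponent $2\UDL{d}-\UDL{q}=-(d+1)$ is a nice touch not in the paper, but the argument is otherwise identical.
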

\subsection{The Harish-Chandra realization of irreducible bounded symmetric domain}
Let $\Omega$ be an irreducible bounded symmetric domain in its Harish-Chandra realization, 
$r$ the rank of $\Omega$ and $a,b$ nonnegative integers defined in \cite{LuHu}. 
Then, we see that 
$l=r$ and $n_j=a(r-j), d_j=-1-\frac{a(r-1)}{2}, q_j=b$ for $1 \leq j \leq r$. 
Hence, we obtain $\bmin =- \frac{1}{N}$ and $\bint = \frac{a(r-1)}{2N}$, where $N := q_j-2d_j = a(r-1)+b+2$ is the genus of $\Omega$. 

\begin{Cor}[\mbox{\cite[Theorem]{LuHu}}]  \label{Clas2Cor} 
Suppose $\sita > - \frac{1}{N}$. 
If the composition operator $C_{\varphi}$ is bounded on $L^q_a(\Omega, dV_{\sita})$ 
for some $q>0$ and $\sita + \frac{a(r-1)}{2N}< \naka$, 
then $C_{\varphi}$ is compact on $L^p_a(\Omega,dV_{\naka})$ if and only if 
\begin{align*}
 \lim_{ z \rightarrow \partial \Omega} \frac{K_{\Omega} \left( \varphi(z), \varphi(z) \right) }{K_{\Omega}(z,z)} = 0 . 
\end{align*}
\end{Cor}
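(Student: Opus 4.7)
The plan is to derive Corollary \ref{Clas2Cor} as a direct specialization of Theorem \ref{MainComp0} once the structural constants $\bmin$ and $\bint$ have been identified for an irreducible bounded symmetric domain $\Omega$ in its Harish-Chandra realization. First I would invoke the known fact (see \cite{I-Y,MM}) that every such $\Omega$ is a minimal bounded homogeneous domain with center $0$, so the general framework of the main theorem applies. Next I would realize $\Omega$ as a homogeneous Siegel domain $\Siegel$ via a biholomorphism $\Phi$ and read off from \cite{LuHu} the Piatetski-Shapiro / Gindikin parameters: $l = r$, and for each $1 \leq j \leq r$,
\begin{align*}
n_j = a(r-j), \qquad d_j = -1 - \tfrac{a(r-1)}{2}, \qquad q_j = b.
\end{align*}

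The second step is the arithmetic: plug these into the definitions of $\bmin$ and $\bint$ from section~\ref{sect22}. Observe that the denominator $2(-2d_j + q_j) = 2 + a(r-1) + b + q_j$ is independent of $j$ once one notes $-2d_j = 2 + a(r-1)$, so $2(-2d_j + q_j) = 2N$ with $N = a(r-1) + b + 2$ the genus. Then
\begin{align*}
\bmin = -\min_{1\leq j\leq r} \frac{n_j + 2}{2N} = -\frac{0 + 2}{2N} = -\frac{1}{N}
\end{align*}
(attained at $j = r$), and
\begin{align*}
\bint = \max_{1\leq j\leq r} \frac{n_j}{2N} = \frac{a(r-1)}{2N}
\end{align*}
(attained at $j = 1$). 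With these constants in hand, the hypothesis of the corollary, namely $\sita > -\frac{1}{N}$ and $\naka > \sita + \frac{a(r-1)}{2N}$, matches the hypothesis $\sita > \bmin$ and $\naka > \sita + \bint$ of Theorem \ref{MainComp0} verbatim.

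Finally, I would simply apply Theorem \ref{MainComp0} to $\U = \Omega$ to conclude the equivalence between compactness of $C_\varphi$ on $L^p_a(\Omega, dV_\naka)$ and the vanishing boundary behavior of the ratio of Bergman kernels. I do not expect any genuine obstacle here, as the corollary is purely a matter of specialization; the only piece of bookkeeping that warrants care is confirming the two extremizations above, in particular that $\min\{n_j+2\}$ is attained at the largest index $j = r$ (where $n_r = 0$) while $\max\{n_j\}$ is attained at $j = 1$ (where $n_1 = a(r-1)$), which is immediate since $n_j = a(r-j)$ is a decreasing function of $j$.
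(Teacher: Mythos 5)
Your proposal is correct and follows essentially the same route as the paper, which likewise just records the Gindikin parameters $l=r$, $n_j=a(r-j)$, $d_j=-1-\frac{a(r-1)}{2}$, $q_j=b$, computes $\bmin=-\frac{1}{N}$ and $\bint=\frac{a(r-1)}{2N}$ with $N=a(r-1)+b+2$, and specializes Theorem \ref{MainComp0}. (Only a cosmetic slip: your intermediate expression ``$2(-2d_j+q_j)=2+a(r-1)+b+q_j$'' should read $-2d_j+q_j=2+a(r-1)+b=N$, but your conclusion $2(-2d_j+q_j)=2N$ and the resulting constants are right.)
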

\subsection{The polydisk}
Let $\mathcal{U}$ be the polydisk $\mathbb{D}^m := \mathbb{D} \times \cdots \times \mathbb{D}$. 
Then, we have $l=m$ and $n_j=0, d_j=-1, q_j=0$ for $1 \leq j \leq m$. 
Hence, we have $\bmin =- \frac{1}{2}$ and $\bint =0$. 
\begin{Cor}\label{PolydiskCor} 
Suppose $\naka >- \frac{1}{2}$. 
If the composition operator $C_{\varphi}$ is bounded on $L^q_a(\mathbb{D}^m, dV_{\sita})$ for some $- \frac{1}{2} < \sita < \naka$, 
then $C_{\varphi}$ is compact on $L^p_a(\mathbb{D}^m,dV_{\naka})$ if and only if 
\begin{align*}
 \lim_{ z \rightarrow \partial \mathbb{D}^m} \frac{K_{\mathbb{D}^m} \left( \varphi(z), \varphi(z) \right) }{K_{\mathbb{D}^m}(z,z)} = 0 . 
\end{align*}
\end{Cor}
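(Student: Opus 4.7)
The plan is to deduce Corollary \ref{PolydiskCor} directly from Theorem \ref{MainComp0} by identifying the polydisk as a minimal bounded homogeneous domain with specific Siegel-domain parameters, and then computing the two invariants $\bmin$ and $\bint$ explicitly.

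First, I would verify that $\mathbb{D}^m$ is a minimal bounded homogeneous domain with center $0$. Homogeneity is clear by taking products of M\"obius automorphisms in each coordinate. Minimality at the center $0$ follows from the criterion $K_D(z,t)=1/\Vol{D}$ recorded just after the definition of minimal domain: since the Bergman kernel of the polydisk factors as $K_{\mathbb{D}^m}(z,w)=\prod_{j=1}^{m}K_{\mathbb{D}}(z_j,w_j)$, one has $K_{\mathbb{D}^m}(z,0)=\prod_{j=1}^{m}K_{\mathbb{D}}(z_j,0)=1/\pi^m=1/\Vol{\mathbb{D}^m}$ for every $z\in\mathbb{D}^m$.

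Next, I would identify a Siegel realization. Applying the Cayley transform in each coordinate sends $\mathbb{D}^m$ biholomorphically onto the tube domain $T_{\Omega}=\mathbb{R}^m+i\Omega$ over the cone $\Omega=(\mathbb{R}_{>0})^m$ of rank $l=m$. This is a Siegel domain of the first kind, so the Hermitian form $F$ is absent (no fiber variable $\eta$). Hence, in the parametrization of section \ref{sect22}, each $j\in\{1,\dots,m\}$ contributes $n_j=0$ and $q_j=0$. The remaining parameter $d_j$ is pinned down by the shape of the Bergman kernel of the disk, $K_{\mathbb{D}}(z,w)=\pi^{-1}(1-z\bar{w})^{-2}$, which under the Cayley transform becomes the standard kernel of the upper half-plane with exponent $-2$; since the general Siegel formula gives exponent $2d_j-q_j$, this forces $d_j=-1$ for every $j$.

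Substituting these values into the definitions of $\bmin$ and $\bint$ recorded in section \ref{sect22} yields
$$\bmin=-\min_{1\le j\le m}\frac{n_j+2}{2(-2d_j+q_j)}=-\frac{2}{2\cdot 2}=-\frac{1}{2},\qquad\bint=\max_{1\le j\le m}\frac{n_j}{2(-2d_j+q_j)}=0.$$
Because $\bint=0$, the condition $\naka>\sita+\bint$ appearing in Theorem \ref{MainComp0} collapses to $\naka>\sita$, which matches the hypothesis of the corollary. The conclusion is then exactly the specialization of Theorem \ref{MainComp0} to $\mathcal{U}=\mathbb{D}^m$. The only step requiring real attention — and thus the main (though mild) obstacle — is correctly reading off $(n_j,d_j,q_j)$ from the product structure of $\mathbb{D}^m$; once these are identified, the corollary is a purely algebraic specialization of the main theorem.
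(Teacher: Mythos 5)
Your proposal is correct and follows exactly the paper's route: identify the Siegel parameters $l=m$, $n_j=0$, $d_j=-1$, $q_j=0$ for the polydisk, compute $\bmin=-\tfrac{1}{2}$ and $\bint=0$, and specialize Theorem \ref{MainComp0}. The only difference is that you justify the parameter values and the minimality of $\mathbb{D}^m$ at $0$ explicitly, whereas the paper simply states them; both verifications are accurate.
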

\subsection{A nonsymmetric minimal homogeneous domain}
Let $T_{\Omega}$ be the tube domain over Vinberg cone. 
It is known that $T_{\Omega}$ is a nonsymmetric homogeneous domain (see \cite{B-N}). 
By \cite[Proposition 3.8]{I-K}, the representative domain $\mathcal{U}$ of $T_{\Omega}$ 
is a nonsymmetric minimal bounded homogeneous domain with a center $0$. 
In this case, we have $l=3$ and $\UDL{n} = (2,0,0), \UDL{d} = (-2,-\frac{3}{2},-\frac{3}{2}), \UDL{q} = (0,0,0)$. 
Hence, we have $\bmin =- \frac{1}{3}$ and $\bint =\frac{1}{4}$. 

\begin{Cor}\label{VinbergCor} 
Suppose $\sita >- \frac{1}{3} $ and the composition operator $C_{\varphi}$ is bounded on $L^q_a(\U, dV_{\sita})$. 
For $ \naka > \sita + \frac{1}{4} $, $C_{\varphi}$ is compact on $L^p_a(\U,dV_{\naka})$ if and only if 
\begin{align*}
 \lim_{ z \rightarrow \partial \U} \frac{K_{\U} \left( \varphi(z), \varphi(z) \right) }{K_{\U}(z,z)} = 0 . 
\end{align*}
\end{Cor}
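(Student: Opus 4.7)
The plan is to apply Theorem~\ref{MainComp0} directly; the substance of the corollary is just the bookkeeping required to verify its hypotheses for the representative domain $\mathcal{U}$ of $T_\Omega$. I would proceed in three short steps.

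First, I would place $\mathcal{U}$ in the framework of the main theorem. By \cite[Proposition 3.8]{I-K}, the representative domain of $T_\Omega$ is a minimal bounded homogeneous domain with center $0$, and by \cite{B-N} the Vinberg cone (hence $T_\Omega$, hence $\mathcal{U}$) is nonsymmetric. Thus $\mathcal{U}$ falls within the scope of Theorem~\ref{MainComp0}.

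Second, I would identify the Siegel realization $\mathcal{D}$ of $\mathcal{U}$ and extract the numerical invariants $\UDL{n}$, $\UDL{d}$, $\UDL{q}$ of Section~\ref{sect22}. Since $T_\Omega$ is a tube over the three-dimensional Vinberg cone, one may take $\mathcal{D}=T_\Omega$ (a Siegel domain of tube type, with $F\equiv 0$), so the rank is $l=3$ and the invariants reduce to those of the Vinberg cone itself. Reading them off from the normal $j$-algebra of $T_\Omega$ (as in \cite{B-N}) yields $\UDL{n} = (2,0,0)$, $\UDL{d} = (-2,-\tfrac{3}{2},-\tfrac{3}{2})$, $\UDL{q} = (0,0,0)$, as asserted in the statement.

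Third, I would substitute these into the formulas of Section~\ref{sect22}:
\begin{align*}
\bmin &= -\min\left\{ \tfrac{2+2}{2\cdot 4},\ \tfrac{0+2}{2\cdot 3},\ \tfrac{0+2}{2\cdot 3} \right\} = -\tfrac{1}{3},\\
\bint &= \max\left\{ \tfrac{2}{2\cdot 4},\ \tfrac{0}{2\cdot 3},\ \tfrac{0}{2\cdot 3} \right\} = \tfrac{1}{4}.
\end{align*}
With these values, the hypotheses $\sita > -\tfrac{1}{3}$ and $\naka > \sita + \tfrac{1}{4}$ of the corollary coincide with $\sita > \bmin$ and $\naka > \sita + \bint$, so the conclusion is the direct specialization of Theorem~\ref{MainComp0} to $\mathcal{U}$.

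The only real obstacle is the computation of the invariants $\UDL{n}$, $\UDL{d}$, $\UDL{q}$ themselves: once the normal decomposition of $T_\Omega$ is spelled out, each entry is read off mechanically, but one must take care that $\UDL{d}$ here follows the sign convention of \cite{B-K}, which is opposite to that of \cite{I-Y} (as noted parenthetically in Section~\ref{sect22}).
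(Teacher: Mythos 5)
Your proposal is correct and follows exactly the route the paper takes: the paper's own treatment of this corollary consists precisely of recording the invariants $l=3$, $\UDL{n}=(2,0,0)$, $\UDL{d}=(-2,-\tfrac{3}{2},-\tfrac{3}{2})$, $\UDL{q}=(0,0,0)$, computing $\bmin=-\tfrac{1}{3}$ and $\bint=\tfrac{1}{4}$ from the formulas of Section \ref{sect22}, and invoking Theorem \ref{MainComp0}. Your arithmetic checks out, and your remark about the sign convention for $\UDL{d}$ is the right caution to flag.
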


\subsection*{Acknowledgment}
The author would like to express my gratitude to Professor H.~Ishi for advices and suggestions. 
The author would also like to thank to Professor T.~Ohsawa for helpful discussions. 


\end{document}